\def\ssign{\textsection\nobreak\hspace{1pt plus 0.3pt}}
\let\origsection=\section 
\def\mysection{\@mystartsection{section}{1}\z@{.7\linespacing\@plus\linespacing}{.5\linespacing}{\normalfont\scshape\centering\ssign}}
\def\section{\@ifstar{\origsection*}{\mysection}}
\def\appendix{\par\c@section\z@ \c@subsection\z@
	\let\sectionname\appendixname
	\let\section=\origsection
	\def\thesection{\@Alph\c@section}}
\def\@mystartsection#1#2#3#4#5#6{\if@noskipsec \leavevmode \fi
	\par \@tempskipa #4\relax
	\@afterindenttrue
	\ifdim \@tempskipa <\z@ \@tempskipa -\@tempskipa \@afterindentfalse\fi
	\if@nobreak \everypar{}\else
	\addpenalty\@secpenalty\addvspace\@tempskipa\fi
	\@dblarg{\@mysect{#1}{#2}{#3}{#4}{#5}{#6}}}
\def\@mysect#1#2#3#4#5#6[#7]#8{\edef\@toclevel{\ifnum#2=\@m 0\else\number#2\fi}\ifnum #2>\c@secnumdepth \let\@secnumber\@empty
	\else \@xp\let\@xp\@secnumber\csname the#1\endcsname\fi
	\@tempskipa #5\relax
	\ifnum #2>\c@secnumdepth
	\let\@svsec\@empty
	\else
	\refstepcounter{#1}\edef\@secnumpunct{\ifdim\@tempskipa>\z@ \@ifnotempty{#8}{\@nx\enspace}\else
		\@ifempty{#8}{.}{\@nx\enspace}\fi
	}\@ifempty{#8}{\ifnum #2=\tw@ \def\@secnumfont{\bfseries}\fi}{}\protected@edef\@svsec{\ifnum#2<\@m
		\@ifundefined{#1name}{}{\ignorespaces\csname #1name\endcsname\space
		}\fi
		\@seccntformat{#1}}\fi
	\ifdim \@tempskipa>\z@ \begingroup #6\relax
	\@hangfrom{\hskip #3\relax\@svsec}{\interlinepenalty\@M #8\par}\endgroup
	\ifnum#2>\@m \else \@tocwrite{#1}{#8}\fi
	\else
	\def\@svsechd{#6\hskip #3\@svsec
		\@ifnotempty{#8}{\ignorespaces#8\unskip
			\@addpunct.}\ifnum#2>\@m \else \@tocwrite{#1}{#8}\fi
	}\fi
	\global\@nobreaktrue
	\@xsect{#5}}
\let\setminus=\smallsetminus
\let\emptyset=\varnothing
\def\moverlay{\mathpalette\mov@rlay}
\def\mov@rlay#1#2{\leavevmode\vtop{   \baselineskip\z@skip \lineskiplimit-\maxdimen
		\ialign{\hfil$\m@th#1##$\hfil\cr#2\crcr}}}
\newcommand{\charfusion}[3][\mathord]{
	#1{\ifx#1\mathop\vphantom{#2}\fi
		\mathpalette\mov@rlay{#2\cr#3}
	}
	\ifx#1\mathop\expandafter\displaylimits\fi}
\newcommand*\linenomathpatch[1]{%
	\expandafter\pretocmd\csname #1\endcsname {\linenomath}{}{}%
	\expandafter\pretocmd\csname #1*\endcsname{\linenomath}{}{}%
	\expandafter\apptocmd\csname end#1\endcsname {\endlinenomath}{}{}%
	\expandafter\apptocmd\csname end#1*\endcsname{\endlinenomath}{}{}%
}
\newcommand*\linenomathpatchAMS[1]{%
	\expandafter\pretocmd\csname #1\endcsname {\linenomathAMS}{}{}%
	\expandafter\pretocmd\csname #1*\endcsname{\linenomathAMS}{}{}%
	\expandafter\apptocmd\csname end#1\endcsname {\endlinenomath}{}{}%
	\expandafter\apptocmd\csname end#1*\endcsname{\endlinenomath}{}{}%
}
\let\linenomathAMS\linenomathWithnumbers
\patchcmd\linenomathAMS{\advance\postdisplaypenalty\linenopenalty}{}{}{}
\let\linenomathAMS\linenomathNonumbers
\theoremstyle{plain}
\newtheorem{theorem}{Theorem}[section]
\crefname{theorem}{Theorem}{Theorems}
\crefname{proposition}{Proposition}{Propositions}
\crefname{corollary}{Corollary}{Corollaries}
\newtheorem{lemma}[theorem]{Lemma}
\crefname{lemma}{Lemma}{Lemmata}
\newtheorem{conjecture}[theorem]{Conjecture}
\crefname{conjecture}{Conjecture}{Conjectures}
\crefname{problem}{Problem}{Problem}
\newtheorem{claim}[theorem]{Claim}
\crefname{claim}{Claim}{Claims}
\crefname{observation}{Observation}{Observations}
\crefname{setup}{Setup}{Setups}
\crefname{fact}{Fact}{Facts}
\crefname{algorithm}{Algorithm}{Algorithms}
\crefname{remark}{Remark}{Remarks}
\crefname{example}{Example}{Examples}
\theoremstyle{definition}
\crefname{definition}{Definition}{Definitions}
\crefname{construction}{Construction}{Constructions}
\crefname{question}{Question}{Questions}
\numberwithin{equation}{section}
\crefname{appendix}{Appendix}{Appendix}
\crefname{figure}{Figure}{Figures}
\newcommand{\rf}[1]{\cref{#1} (\nameref*{#1})}
\theoremstyle{definition}
\newtheorem*{space-prop}{Space}
\newenvironment{proofclaim}[1][Proof of the claim]{\begin{proof}[#1]}{\end{proof}}
\def\COMMENT#1{}
\let\polishlcross=\l
\def\l{\ifmmode\ell\else\polishlcross\fi}
\newcommand{\es}{\emptyset}
\newcommand{\eps}{\varepsilon}
\renewcommand{\rho}{\varrho}
\newcommand{\sm}{\setminus}
\renewcommand{\subset}{\subseteq}
\newcommand{\Exp}{\mathbb{E}}
\newcommand{\rdeg}{\overline{\deg}}
\newcommand{\cF}{\mathcal{F}}
\newcommand{\cK}{\mathcal{K}}
\newcommand{\cM}{\mathcal{M}}
\newcommand{\cQ}{\mathcal{Q}}
\newcommand{\cS}{\mathcal{S}}
\newcommand{\cU}{\mathcal{U}}
\newcommand{\cV}{\mathcal{V}}
\newcommand{\cW}{\mathcal{W}}
\DeclareMathOperator{\poly}{poly}
\title{Blowing up Dirac's theorem}
\author[R.~Lang]{Richard Lang}
\address[R.~Lang]{
	Departament de Matemàtiques,
	Universitat Politècnica de Catalunya,
	Barcelona, Spain
}
\email{richard.lang@upc.edu}
\author[N.~Sanhueza-Matamala]{Nicolás Sanhueza-Matamala}
\address[N.~Sanhueza-Matamala]{ 
	Departamento de Ingeniería Matemática,
	Facultad de Ciencias Físicas y Matemáticas,
	Universidad de Concepción,
	Concepción, Chile
}
\email{nsanhuezam@udec.cl}
\begin{document}

\begin{abstract}
	We show that every graph $G$ on $n$ vertices with $\delta(G) \geq (1/2+\eps)n$ is spanned by a complete blow-up of a cycle with clusters of nearly uniform size $\Omega(\log n)$.
	The proof is based on a recently introduced approach for finding vertex-spanning substructures via blow-up covers.
\end{abstract}

\subjclass[2020]{05C35 (primary), 05C45 (secondary)}
\keywords{Hamilton cycles, minimum degree}

\maketitle
%\thispagestyle{empty}

%	\tableofcontents

	\vspace{-0.5cm}

\section{Introduction}

A fundamental problem in extremal combinatorics is to determine the least number of edges that guarantees a copy of a graph $F$ in an $n$-vertex host graph~$G$.
Erdős and Stone~\cite{ES46} proved that host graphs whose density slightly exceeds the \emph{Turán density} $\tau(F)$, meaning $e(G) \geq (\tau(F) +\eps) \binom{n}{2}$, even contain a (complete) \emph{blow-up} of $F$.
Formally, such a blow-up is defined by replacing the vertices of $F$ with vertex-disjoint \emph{clusters} and each edge with a complete bipartite graph whose parts are the corresponding clusters.
The arguments of Erdős and Stone show that we can find blow-ups of $F$ with clusters of size $\Omega(\log n)$, which is optimal as witnessed by dense random graphs.

It is natural, then, to ask if this `blow-up' phenomenon extends to  types of structures other 	than copies of a fixed graph $F$.
We study this question for vertex-spanning subgraphs.
In this setting, one typically replaces conditions on the number of edges with conditions on the minimum degree.
For instance, the classic theorem of Dirac~\cite{Dir52} states that every graph $G$ on $n \geq 3$ vertices with $\delta(G) \geq n/2$ contains a Hamilton cycle.
A consequence of the `Bandwidth Theorem' of Böttcher, Schacht and Taraz~\cite{BST09} is that a minimum degree of $\delta(G) \geq (1/2 + \eps)n$ guarantees a vertex-spanning blow-up of a cycle with clusters of order $\Omega(1)$.
We extend this with the following blown-up version of Dirac's theorem.

\begin{theorem}\label{thm:dirac-blown-up}
	For every $\eps, \eta > 0$, there are $c > 0$ and $n_0$ such that every graph $G$ on $n \geq n_0$ vertices with $\delta(G) \geq (1/2 + \eps)n$ contains a vertex-spanning blow-up of a cycle where each cluster has size in the range  $(1 \pm \eta) c \log n$.
\end{theorem}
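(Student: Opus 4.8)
The plan is to assemble the spanning blow-up of a cycle from three ingredients that are standard in spirit: a dependent-random-choice engine that manufactures ``complete'' clusters and fixes the scale $\Theta(\log n)$; a \emph{blow-up cover}, that is, a collection of vertex-disjoint blow-ups of short paths with clusters of the right size which together leave out only $o(n)$ vertices and can be chained into a single blow-up of a path wrapping around a cycle; and an \emph{absorber}, a blow-up of a path installed in advance which can swallow the $o(n)$ leftover. Fix constants $1/n_0 \ll c \ll \mu \ll \eps,\eta$ and put $m := \lceil c\log n\rceil$. We will build every cluster to have size exactly $m$ (dealing with the residue $n \bmod m < m$ by allowing a bounded number of clusters to have size $m+1$), so uniformity of the cluster sizes is automatic for $n$ large, and it only remains to guarantee that the clusters form the cyclic complete-bipartite pattern and cover $V(G)$.

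\textbf{The dependent-random-choice engine.}
Call a set $S$ of $m$ vertices \emph{good} if its common neighbourhood $\bigcap_{v\in S}N(v)$ has size at least $\eps n$, and call a set $U$ \emph{rich} if all but a $\mu$-fraction of its $m$-subsets are good; the central fact is a linking statement of roughly the following shape: whenever $X$ and $Y$ are good $m$-sets and $W \subseteq V(G)$ has size at least $\mu n$, there is a blow-up of a path with first cluster $X$, last cluster $Y$, at most $\ell_0=\ell_0(\eps)$ clusters, every internal cluster a good $m$-set contained in $W$, and with the extra ``robustness'' that the second and second-to-last clusters are rich. This is proved by iterating dependent random choice: starting from $X$ one passes to a rich subset of $N(X)\cap W$, selects a good $m$-set inside it as the next cluster, and repeats, using that after a bounded number of steps the set of vertices reachable in this way has linear size and hence meets the analogous set reached backwards from $Y$. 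The only point where the value of $c$ matters is the estimate $\binom{|U|}{m}\le 2^{O(m)} = n^{O(c)}$ bounding the number of potentially-bad $m$-subsets against the survival probability $\eps^{O(\log n)}$ in the random choice; this is exactly what forces $m$ to have order $c\log n$ with $c$ small in terms of $\eps$, and the dense random graph $G(n,1/2+2\eps)$ — whose minimum degree exceeds $(1/2+\eps)n$ with high probability yet which has no complete bipartite $K_{t,t}$ with $t = \omega(\log n)$ — shows that the order $\log n$ cannot be improved.

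\textbf{Cover, chain, absorb.}
To produce the cover I would first reduce to the dense setting by a regularity argument: apply Szemerédi's lemma, pass to the reduced graph $\mathcal{R}$ (minimum degree at least $(1/2+\eps/2)N$ after deleting sparse pairs), and make consecutive pairs super-regular along a suitable closed walk in $\mathcal{R}$ — one that visits every cluster roughly $L/m$ times, where $L$ is the common cluster size; such a walk exists because the minimum-degree condition makes $\mathcal{R}$ well-connected enough to admit edge multiplicities with equal marginals. This walk prescribes a cyclic sequence of ``slots'', each to be filled by a good $m$-set inside a regularity cluster so that consecutive slots form a complete bipartite pair drawn from a super-regular pair; filling the slots one by one around the cycle, using the engine above both to keep the choices rich and (at the very end) to close up, yields a blow-up of a cycle covering all but the $O(\mu n)$ vertices discarded by the regularity step. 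Before any of this one sets aside a reservoir and builds the absorber inside it: a blow-up of a path $\mathcal{A}$ with fixed rich endpoints $X_1,X_2$, assembled from many small ``absorbing gadgets'' (each a short blow-up of a path that can be locally rerouted so as to insert a prescribed good $m$-set) connected through the engine; requiring in the cover step that the path closes up through $\mathcal{A}$ via $X_1,X_2$, the union covers everything except a set of size at most $\mu^2 n$, which $\mathcal{A}$ then absorbs, giving the desired spanning blow-up of a cycle.

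\textbf{Main obstacle.}
The heart of the difficulty — and the reason a naive greedy construction fails — is reconciling the purely \emph{local} freedom supplied by dependent random choice (inside any one dense pair there are plenty of complete $K_{m,m}$'s, with room to spare) with two rigid \emph{global} constraints: the clusters must \emph{exactly} partition each regularity cluster, a perfect tiling by complete bipartite pairs whose neighbouring tilings are forced to be mutually consistent all the way around the cycle, and the whole thing must close into a single cycle; meanwhile the $\log n$ scale leaves only a sub-polynomial margin in the union bounds, so the process may not waste more than an $o(1)$-fraction of any cluster at any stage. Making the absorber simultaneously powerful enough to correct the resulting $o(n)$-sized defects and compatible with a \emph{complete} blow-up (ordinary absorbers introduce non-edges) is the other delicate point; both are presumably organised cleanly by the blow-up-cover machinery referred to in the abstract.
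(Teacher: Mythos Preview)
Your outline correctly identifies the scale $c\log n$ and the reason for it (a Kővári--Sós--Turán / dependent-random-choice count), and the idea of connecting clusters by a short Zarankiewicz-type argument is exactly what the paper does in its \cref{lem:connecting-blow-ups}. But the proposal is not a proof: you explicitly concede that the absorber is ``the other delicate point'' and then write that it is ``presumably organised cleanly by the blow-up-cover machinery referred to in the abstract''. That is precisely the step that carries the content, and you have deferred it. Concretely, the leftover after your regularity/cover step is an \emph{arbitrary} set of $O(\mu n)$ vertices; it need not decompose into good $m$-sets, so gadgets that ``insert a prescribed good $m$-set'' do not obviously absorb it, and the standard per-vertex absorbers you allude to do introduce the non-edges you worry about. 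Without a concrete construction here, the argument does not close.

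The paper sidesteps absorption entirely, and this is the genuine difference from your route. Rather than cover-almost-then-absorb, it produces an \emph{exact} cover up front (\cref{lem:simple-blow-up-cover}): vertex-disjoint blow-ups $R_i(\cV_i)$ whose reduced graphs $R_i$ are themselves $s$-vertex Dirac graphs. The ``almost'' cover comes from a property-graph argument (edges of an auxiliary $s$-graph record $s$-sets on which $G$ inherits the degree condition) plus Nikiforov's supersaturation blow-up lemma; the remaining $o(n)$ vertices are then picked up \emph{not} by an absorber but by a rooted-blow-up lemma (\cref{lem:rooted-blow-ups}) that, for each leftover vertex $u$, finds a fresh balanced blow-up with $\{u\}$ as a singleton cluster, nibbling only $O(m)$ vertices from the existing clusters. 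Inside each $R_i$ one takes a Hamilton cycle by Dirac's theorem; the pieces are then chained into one cycle by the connecting lemma (a direct double-count giving $K_{m',m'}$ between any two $m$-clusters through the remaining vertices), and a final subdivision of the big clusters equalises sizes. So where you reach for regularity on $G$ plus absorption, the paper uses degree inheritance to a hypergraph of ``good $s$-sets'', tiles that hypergraph, and never needs to repair a defect.
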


This result is related to recent work of the authors~\cite{LS24a} for the analogous problem in hypergraphs and follows a similar proof strategy.
However, unlike in the hypergraph setting, where the proof techniques limit us to clusters of $\poly(\log \log n)$ vertices, we are able to achieve asymptotically optimal cluster sizes in \cref{thm:dirac-blown-up}.

{Our result is related to the Bandwidth Theorem of Böttcher, Schacht and Taraz~\cite{BST09}, which solved a conjecture of Bollobás and Komlós~\cite{KS96}.
For the case of bipartite graphs, the theorem implies that a graph $G$ as in \cref{thm:dirac-blown-up} contains every $n$-vertex bipartite graph $H$ that has `sublinear bandwidth' and bounded maximum degree $\Delta = O(1)$.
One can show that such a graph $H$ is contained in a blow-up of a path $B$ with clusters of size $o(n)$ (see \cite[\S 7]{BST09}), which explains the relation to embedding spanning graph blow-ups.
Böttcher et al.~\cite{BST09} asked whether their result can be extended to graphs $H$ where $\Delta(H)$ grows with~$n$.
Partial progress in this question was made by Böttcher, Taraz and Würfl~\cite{BTW16}, who extended the result for graphs of `bounded arrangeability' but not necessarily constant maximum degree.
\cref{thm:dirac-blown-up} answers this question without further assumptions on the maximum degree when the cluster sizes of $B$ are $o(\log n)$.
It remains an open problem to determine whether this can be extended to sublinear cluster sizes (together with a suitable maximum degree condition).}

{
Beyond this, it would be interesting to understand whether \cref{thm:dirac-blown-up} can be generalised to more complex structures such as powers of cycles.
Formally, the \emph{$r$th power of a cycle} has a cyclically ordered set of vertices such that every $r$ consecutive vertices form an $r$-clique.
It was shown by Komlós, Sárközy and Szemerédi~\cite{KSS98} that, for all $k \geq 2$, the relative minimum degree threshold forcing a Hamilton $k$th power of a cycle is $1-1/(k+1)$.
We believe that our result extends as follows.
\begin{conjecture} 
	For every $\eps, \eta > 0$ and $r\geq 1$, there are $c > 0$ and $n_0$ such that every graph $G$ on $n \geq n_0$ vertices with $\delta(G) \geq (1-1/(r+1) + \eps)n$ contains a vertex-spanning blow-up of the $r$th power of a cycle with cluster sizes in the range of $(1 \pm \eta) c \log n$.
\end{conjecture}
}

\section{Proof of the main result}

In the following, we present two intermediate results, \cref{lem:simple-blow-up-cover,lem:connecting-blow-ups}, from which we then derive \cref{thm:dirac-blown-up}.

Let us begin by introducing some terminology.
Consider a \emph{set family} $\cV$, whose elements are assumed to be pairwise disjoint by the convention of this paper.
A set $X$ is \emph{$\cV$-partite} if it has at most one vertex in each part of $\cV$.
We say that $\cV$ is \emph{$m$-balanced} if $|V| = m$ for every $V \in \cV$.
Similarly, $\cV$ is \emph{$(1\pm\eta)m$-balanced} if $(1-\eta)m \leq |V| \leq  (1 + \eta) m$ for  every $V \in \cV$.
Moreover, $\cV$ is \emph{quasi $(1\pm\eta)m$-balanced} if there is a singleton $V^\ast \in \cV$ and $\cV \sm \{V^\ast\}$ is $(1\pm \eta)m$-balanced.
{We refer to this unique singleton cluster as the \emph{exceptional cluster} of $\cV$.}
Given a graph $R$ and a set family $\cV = \{V_v\}$ indexed by $v \in V(R)$, the \emph{$\cV$-blow-up} is the blow-up obtained by replacing each $v$ with the set~$V_v$.
We denote it by $R(\cV)$, and say that $R$ is the \emph{reduced graph} of the blow-up.
Our constant hierarchies are expressed in standard $\gg$-notation.
%\footnote{
To be precise, we write $y \gg x$ to mean
that for any $y \in (0, 1]$ there exists an $x_0 \in (0,1)$
such that for all $x_0 \geq x$ the subsequent statements
hold.  Hierarchies with more constants are defined in a
similar way and are to be read from left to right following the order in which the constants are chosen.
%}
Moreover, we tend to ignore rounding errors if the context allows~it.

Our main result is derived from the following two lemmata.
The first one states that we can cover any Dirac-type graph with vertex-disjoint blow-ups whose reduced graphs inherit the minimum degree up to a minor deviation.

\begin{lemma}[Simple blow-up cover]\label{lem:simple-blow-up-cover}
	Let $\eps \gg 1/s \gg \eta \gg c \gg 1/n$ and $m = c \log n$.
	Let  $G$ be an $n$-vertex graph with $\delta(G) \geq (1/2 + \eps) n$.
	Then the vertices of $G$ are covered by vertex-disjoint quasi $(1\pm\eta)m$-balanced blow-ups $R_1(\cV_1),\dots,R_t(\cV_t) \subset G$ such that each reduced graph $R_i$ has $s$ vertices and satisfies  $\delta(R_i) \geq (1/2 + \eps/4) s$.
\end{lemma}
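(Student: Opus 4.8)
\emph{Reduction to a tiling.} I would first recast the statement as a tiling problem with tripartite pieces and then carry out that tiling directly, avoiding the regularity method — which is what keeps the clusters of order $\log n$ rather than $\log\log n$. Fix the reduced graph once and for all: set $a:=\lfloor(1/2-\eps/2)s\rfloor$ and $b:=s-2a$, so that $\eps s\leq b\leq\eps s+2$ and $a+b=s-a\geq(1/2+\eps/2)s$; then $R:=K_{a,a,b}$ has $\delta(R)=\min\{2a,a+b\}=a+b\geq(1/2+\eps/2)s$ (using that $s$ is large, so $b\leq a$). Each block will be an $R$-blow-up with clusters of size $m=c\log n$, i.e.\ essentially a copy of $K_{am,am,bm}$. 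Partition $V(G)$ uniformly at random into classes $X,Y,Z$ with $|X|=|Y|\approx(a/s)n\approx n/2$ and $|Z|\approx(b/s)n\approx\eps n$. Since all three classes have linear size, a Chernoff bound with a union bound over the $n$ vertices shows that, with positive probability, $|N(v)\cap W|\geq(1/2+\eps/3)|W|$ for every vertex $v$ and every $W\in\{X,Y,Z\}$; fix such a partition. It now suffices to find partitions $X=P_1\sqcup\cdots\sqcup P_t$, $Y=Q_1\sqcup\cdots\sqcup Q_t$ and $Z=S_1\sqcup\cdots\sqcup S_t$ with $t\approx n/(sm)$, $|P_i|=|Q_i|=am$ and $|S_i|=bm$ — allowing, for a single index $i$, one of the constituent clusters to be a singleton so as to absorb all divisibility errors — such that $G[P_i\cup Q_i\cup S_i]$ is complete tripartite for each $i$: splitting every $P_i$ and $Q_i$ into $a$ clusters of size $m$ and every $S_i$ into $b$ such clusters then gives blow-ups $R_1(\cV_1),\dots,R_t(\cV_t)$ with reduced graph $R$ that cover $V(G)$.

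\emph{Carrying out the tiling.} The bulk is a $K_{am,am}$-factor of the bipartite graph $G[X,Y]$, whose minimum degree comfortably exceeds $|Y|/2+am$ (recall $am=\Theta(\log n)\ll\eps n$), so that a bipartite $K_{t,t}$-tiling argument can be run. The delicate point is compatibility with $Z$: a pair $(P_i,Q_i)$ is useful only if the $2am$-set $P_i\cup Q_i$ still has many common neighbours inside $Z$. I would arrange this with a dependent-random-choice step in $G[X,Z]$ and $G[Y,Z]$: taking $c$ small enough that $(1/2)^{am}\geq n^{-1/10}$, say, one can draw the $P_i$ and $Q_i$ from common neighbourhoods of small subsets of $Z$ so that almost every $P_i\cup Q_i$ retains a common neighbourhood of size at least $n^{9/10}$ in $Z$. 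Then $Z$ is distributed among the pairs by a defect-Hall / max-flow argument in the bipartite assignment graph between $Z$ and $\{P_i\cup Q_i\}$, feasibility being guaranteed by the $\Theta(\eps n)$ slack in the degree hypothesis; the $O(sm)$ vertices left over from divisibility are absorbed into the single tile carrying the singleton cluster.

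\emph{The main obstacle.} The real difficulty is the tiling above, for two intertwined reasons. First, the tiles grow with $n$ — they have $\Theta(\log n)$ vertices — so the classical $H$-factor theorems (Hajnal--Szemer\'edi, Koml\'os--S\'ark\"ozy--Szemer\'edi, hypergraph-matching thresholds) are not available off the shelf, and since the minimum degree exceeds $n/2$ only by a small linear amount, a naive greedy extraction of tiles collapses as soon as a constant fraction of the vertices has been used up; one therefore needs a robust, global construction, which is exactly what the blow-up cover method provides. Second, the three colour classes of the tiling cannot be built independently — an $X$-cluster paired with a $Y$-cluster is worthless unless it retains enough common neighbours in $Z$ — so a dependent-random-choice input, rather than a purely greedy pairing, is unavoidable. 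Throughout, the construction stays inside linear-sized classes, where complete bipartite pieces of order $\log n$ can still be found; this is precisely why one does not pass to a regularity-type decomposition.
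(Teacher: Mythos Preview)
Your plan has a genuine gap at its core. You reduce the lemma to finding a $K_{am,am,bm}$-factor compatible with a random tripartition $X\cup Y\cup Z$, but you never actually construct the bipartite part: nothing in the proposal produces a partition of $X$ and $Y$ into pairs $(P_i,Q_i)$ with $G[P_i,Q_i]$ complete bipartite. Your dependent-random-choice step draws $P_i$ and $Q_i$ from common neighbourhoods of small subsets of $Z$; that controls the common neighbourhood of $P_i\cup Q_i$ in $Z$, but says nothing about the edges between $P_i$ and $Q_i$. A bipartite graph with minimum degree $(1/2+\eps/3)|Y|$ does not come with an off-the-shelf $K_{t,t}$-factor for $t=\Theta(\log n)$, and your third paragraph essentially concedes this (``one therefore needs a robust, global construction, which is exactly what the blow-up cover method provides'') --- but the blow-up cover method is the lemma you are meant to be proving, so this is circular. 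A second, smaller issue: ``quasi $(1\pm\eta)m$-balanced'' means each $\cV_i$ contains exactly one singleton cluster, so you need one singleton per blow-up, not a single singleton absorbing all divisibility error.

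For comparison, the paper does not try to tile directly with log-sized pieces. It passes to an auxiliary $s$-uniform \emph{property hypergraph} $P$ whose edges are the $s$-sets $S$ with $\delta(G[S])\geq(1/2+\eps/2)s$; degree inheritance gives $\delta_1(P)$ close to $\binom{n-1}{s-1}$. A weak-regularity tiling of $P$ (applied to this hypergraph, not to $G$) yields linear-sized lower-regular $s$-tuples covering almost all vertices, and inside each such tuple Nikiforov's theorem produces $m$-balanced blow-ups of some fixed $R_i$ with $\delta(R_i)\geq(1/2+\eps/2)s$. The leftover $o(n)$ vertices are then absorbed via a separate ``rooted blow-up'' lemma, applied in two rounds, after which everything is chopped into quasi $(1\pm\eta)m$-balanced families. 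Note in particular that using regularity here does \emph{not} force the clusters down to $\log\log n$: the regularity is at the hypergraph level and only serves to isolate linear-sized pieces to which Nikiforov is applied, so the clusters remain $\Theta(\log n)$.
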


{This result is reminiscent of the Regularity Lemma~\cite{Sze76}, which produces a single quasi-random blow-up with clusters of linear size and one cluster of exceptional vertices.
In contrast, \cref{lem:simple-blow-up-cover} gives a family of disjoint complete blow-ups with clusters of logarithmic size and one `exceptional' singleton cluster.
A quantitatively weaker result for hypergraphs can be found in our previous work~\cite[Lemma 7.1]{LS24a}.}

The second lemma for the proof of \cref{thm:dirac-blown-up} allows us to find a short chain of blow-ups which connects two other clusters.

\begin{lemma}[Connecting blow-ups]\label{lem:connecting-blow-ups}
	Let $\eps, c_1 \gg c_2 \gg 1/n$,
	$m = c_1 \log n$ and $m' = c_2 \log n$.
	Let $G$ be an $n$-vertex graph with $\delta(G) \geq (1/2 + \eps) n$.
	Let $\{U,V,W\}$ be a partition of $V(G)$ with $|U|=|V|=m$.
	Then there are $m'$-sets $U' \subset U$, $V' \subset V$ and $W' \subset W$ such that $G$ contains the complete bipartite graphs $K(U',W')$ and $K(V',W')$.
\end{lemma}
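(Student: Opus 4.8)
The plan is to reduce the lemma to a single double-counting estimate. For $w\in W$ write $d_U(w):=|N_G(w)\cap U|$ and $d_V(w):=|N_G(w)\cap V|$, and set
\[
	f:=\sum_{w\in W}\binom{d_U(w)}{m'}\binom{d_V(w)}{m'},
\]
which is precisely the number of triples $(U',V',w)$ with $U'$ an $m'$-subset of $U$, $V'$ an $m'$-subset of $V$, $w\in W$ and $U'\cup V'\subseteq N_G(w)$. As there are only $\binom{m}{m'}^{2}$ pairs $(U',V')$, once I show $f\ge m'\binom{m}{m'}^{2}$ it follows that some pair $(U',V')$ has at least $m'$ common neighbours in $W$; taking $W'$ to be any $m'$ of these yields $K(U',W'),K(V',W')\subseteq G$, and $U',V',W'$ are automatically pairwise disjoint since they lie in distinct parts of the partition. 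So the whole proof reduces to the lower bound on $f$.

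The subtlety is that one cannot obtain this bound by first fixing $U'$ with a large common neighbourhood $A=\bigcap_{u\in U'}N_G(u)\cap W$ and then searching inside $A$: averaging only produces a $U'$ with $|A|$ of order $n^{1-\Theta(c_2)}=o(n)$, whereas a vertex of $V$ can have up to $(1/2-\eps)n\gg|A|$ non-neighbours in $W$ and so may miss $A$ entirely. Hence $U$ and $V$ must be handled simultaneously, and the key input is a degree count: every $x\in U\cup V$ has at least $\delta(G)-2m\ge(1/2+\eps/2)n$ neighbours in $W$, so
\[
	\sum_{w\in W}\bigl(d_U(w)+d_V(w)\bigr)=\sum_{x\in U\cup V}|N_G(x)\cap W|\ge 2m(1/2+\eps/2)n\ge(1+\eps)\,m\,|W|.
\]
Thus the average of $d_U(w)+d_V(w)$ over $w\in W$ is at least $(1+\eps)m$, and since each term is at most $m$, a routine averaging argument yields at least $(\eps/3)n$ vertices $w$ — call them \emph{rich} — with $d_U(w)+d_V(w)\ge(1+\eps/2)m$, hence with $d_U(w)\ge(\eps/2)m$ and $d_V(w)\ge(\eps/2)m$ at the same time.

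From here the computation is mechanical. Using $m'=c_2\log n\le(\eps/4)c_1\log n=(\eps/4)m$ (this is where $\eps,c_1\gg c_2$ is used) together with the elementary bound $\binom{a}{m'}\big/\binom{m}{m'}=\prod_{i=0}^{m'-1}\tfrac{a-i}{m-i}\ge\bigl(a/m-m'/m\bigr)^{m'}$, every rich $w$ contributes at least $(\eps/4)^{2m'}\binom{m}{m'}^{2}$ to $f$, whence
\[
	f\ge\tfrac{\eps}{3}\,n\,(\eps/4)^{2m'}\binom{m}{m'}^{2}=\tfrac{\eps}{3}\,n^{\,1-c_2\gamma}\binom{m}{m'}^{2}
\]
for a suitable constant $\gamma=\gamma(\eps)>0$ coming from $(\eps/4)^{2m'}=n^{-c_2\gamma}$ (the base of the logarithm only rescales $\gamma$). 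Finally, choosing $c_2$ small enough that $c_2\gamma\le\tfrac{1}{2}$ and then $n$ large gives $\tfrac{\eps}{3}n^{1-c_2\gamma}\ge\tfrac{\eps}{3}n^{1/2}\ge c_2\log n=m'$, so $f\ge m'\binom{m}{m'}^{2}$, as required.

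The only real obstacle I anticipate is the conceptual point just described: since the two complete bipartite graphs are forced to share the set $W'$, they cannot be built one after the other, and the product double-count together with the ``rich vertex'' averaging is exactly what lets $U$ and $V$ be dealt with at once. The remaining ingredients — the degree bounds, the binomial inequality, and keeping the hierarchy $\eps,c_1\gg c_2\gg1/n$ straight (one needs $c_2$ small compared with $\eps c_1$ and with $1/\ln(1/\eps)$, then $n$ large) — are routine bookkeeping.
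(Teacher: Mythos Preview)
Your proposal is correct and follows essentially the same approach as the paper. Both arguments isolate a set of $\Omega_\eps(n)$ vertices $w\in W$ with $d_U(w),d_V(w)\ge\Omega_\eps(m)$ (the paper via two separate Markov bounds yielding sets $W_U,W_V$ of size $\ge(1/2+\eps/2)n$ each and then intersecting, you via a single averaging on $d_U(w)+d_V(w)$), and then run the identical double-count $\sum_w \binom{d_U(w)}{m'}\binom{d_V(w)}{m'}$ against $m'\binom{m}{m'}^2$ to reach the same contradiction.
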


The proofs of \cref{lem:simple-blow-up-cover,lem:connecting-blow-ups} {are based on a combination of subsampling with Erdős--Stone-type results and can be found in \cref{section:covers} and \cref{sec:blowups}, respectively.}

{
Let us continue with a proof overview on how to derive \cref{thm:dirac-blown-up} from here.
We begin by applying \cref{lem:simple-blow-up-cover} to cover the vertex set of $G$ with disjoint blow-ups $R_1(\cV_1),\dots,R_t(\cV_t) \subset G$ such that each reduced graph $R_i$ inherits the minimum degree condition and the clusters of each family $\cV_i$ have size about $c_1 \log n$ except for the exceptional cluster $V_i^\ast$, which is a singleton.
Next, we carefully merge $V_i^\ast$ with another cluster while retaining the other properties and, for convenience, names.
By  Dirac's theorem, each $R_i$ contains a Hamilton cycle $C_i$, which means that $R_i(\cV_i)$ is spanned by a blow-up of a cycle $C_i(\cV_i)$.
In other words, we have found a partition of $V(G)$ into blow-ups of cycles $C_1,\dots,C_t$ such that each cluster has size about $c_1 \log n$.
The idea is then to successively connect the cycles $C_i(\cV_i)$ and $C_{i+1}(\cV_{i+1})$.
More precisely, we apply \cref{lem:connecting-blow-ups} to find sets $\{U_i',V_{i+1}',W_i'\}$ of size $c_2 \log n$ each with $c_1 \gg c_2$ such that the set $U_i'$ is contained in a cluster of $C_i(\cV_i)$, the set $V_{i+1}'$ is contained in a cluster of $C_{i+1}(\cV_{i+1})$ and $G$ contains $K(U_i',W_i')$ and $K(V_{i}',W_i')$ as subgraphs.
The sets $W_i$ may (arbitrarily) intersect with the rest of the graph, but we ensure that $\bigcup W_i$ only covers a small number of vertices of each cluster, which preserves balancedness.
This results in a blow-up of a cycle that spans $G$, in which the `internal' clusters have size approximately $c_1 \log n$ and the `connecting' clusters have size $c_2 \log n$ as illustrated in \cref{fig;blow-ups}.
We conclude by subdividing the internal clusters, winding around the reduced Hamilton cycle $C_i\subset R_i$, until all clusters have size about $c_2 \log n$.
Now come the details.
}

%%%%%% FIGURE START %%%%%%

\definecolor{ftpink}{RGB}{235, 94, 141}
\definecolor{ftteal}{RGB}{0, 154, 168}
\definecolor{ftgreen}{RGB}{157, 191, 87}
\definecolor{ftblue}{RGB}{32, 143, 206}
\definecolor{ftdarkblue}{RGB}{15, 84, 153}
\definecolor{ftred}{RGB}{206, 49, 64}
\definecolor{ftdarkred}{RGB}{127, 6, 46}

\newcommand{\fc}[1]{}
\pgfdeclarelayer{background}
\pgfdeclarelayer{main}
\pgfdeclarelayer{foreground}
\pgfsetlayers{background,main,foreground}
\begin{figure}
	\centering
	\begin{tikzpicture}[scale=0.65, every node/.style={transform shape}]
		
		\tikzstyle{bigset} = [minimum size=40,circle,line width=0.5,draw opacity=1, fill=white]
		\tikzstyle{smallset} = [minimum size=16,circle,line width=0.5,draw opacity=1, fill=white]
		\tikzstyle{vertex} = [draw=black!60, fill=black=60!,circle,minimum size = 6,inner sep=0]
		
		\tikzstyle{edge} = [fill opacity=0.3]
		\tikzstyle{smalledge} = [line width=11, draw opacity=0.3]
		\tikzstyle{bigedge} = [line width=25, draw opacity=0.3]
		%		\begin{tikzpicture}

			\coordinate (x1) at (-2.50,-3.50) {};
			\coordinate (x2) at (-0.00,-2.50) {};
			\coordinate (x3) at (0.50,-5.00) {};
			\coordinate (x4) at (-1.50,-6.00) {};
			\coordinate (x5) at (-4.00,-5.50) {};
%			\coordinate (x6) at (-2.00,-1.50) {};
			
			\node[bigset, draw=ftgreen, label={90: \textcolor{ftgreen}{\huge $V_i$}}] (X1) at (x1) {};
			\node[bigset, draw=ftgreen, label={90: \textcolor{ftgreen}{\huge $U_i$}}] (X2) at (x2) {};
			\node[bigset, draw=ftgreen, label={90: \fc{$x_3$}}] at (x3) {};
			\node[bigset, draw=ftgreen, label={90: \fc{$x_4$}}] at (x4) {};
			\node[bigset, draw=ftgreen, label={90: \fc{$x_5$}}] (X5) at (x5) {};
			
			\begin{pgfonlayer}{background}
				\draw[draw=ftgreen, bigedge] (x1) -- (x2);
				\draw[draw=ftgreen, bigedge] (x2) -- (x3);
				\draw[draw=ftgreen, bigedge] (x3) -- (x4);
				\draw[draw=ftgreen, bigedge] (x5) -- (x1);
				\draw[draw=ftgreen, bigedge] (x5) -- (x4); 
			\end{pgfonlayer}

			%	 		\begin{scope}[yshift=1cm, scale=2]
				\coordinate (y1) at (6.50,-3.00) {};
				\coordinate (y2) at (9.00,-3.50) {};
				\coordinate (y3) at (5.50,-6.00) {};
				\coordinate (y4) at (4.50,-4.00) {};
				\coordinate (y5) at (8.00,-5.50) {};
				%			\end{scope}	
			
			\node[bigset, draw=ftgreen, label={95: \textcolor{ftgreen}{\huge $V_{i+1}$}}] (Y1) at (y1) {};
			\node[bigset, draw=ftgreen, label={85: \textcolor{ftgreen}{\huge $U_{i+1}$}}] (Y2) at (y2) {};
			\node[bigset, draw=ftgreen, label={320:  \fc{$y_3$}}] at (y3) {};
			\node[bigset, draw=ftgreen, label={250: \fc{$y_4$}}] at (y4) {};
			\node[bigset, draw=ftgreen, label={250: \fc{$y_5$}}] (Y5) at (y5) {};
			
			\begin{pgfonlayer}{background}
				\draw[draw=ftgreen, bigedge] (y1) -- (y2);
				\draw[draw=ftgreen, bigedge] (y2) -- (y5);
				\draw[draw=ftgreen, bigedge] (y3) -- (y4);
				\draw[draw=ftgreen, bigedge] (y5) -- (y3);
				\draw[draw=ftgreen, bigedge] (y1) -- (y4);
			\end{pgfonlayer}
			
			\node[label={180: \textcolor{ftgreen}{\huge{$R_i(\cV_i)$}}}] (X) at (-0.5,-8) {};
			\node[label={180: \textcolor{ftgreen}{\huge{$R_{i+1}(\cV_{i+1})$}}}] (Y) at (8.5,-8) {};
			
			\coordinate (w1) at (0.2,-2.5) {};
			\coordinate (w2) at (6.5,-2.75) {};
			\coordinate (w3) at (3,-1.75) {}; 
			
			\begin{pgfonlayer}{foreground}
				\node[smallset, draw=ftpink, label={[label distance=3mm] 280: \textcolor{ftpink}{\Large $U_{i}'$}}] at (w1) {};
				\node[smallset, draw=ftpink, label={60: \textcolor{ftpink}{\Large$V_{i+1}'$}}] at (w2) {};
				\node[smallset, draw=ftpink, label={[label distance=3mm] 270: \textcolor{ftpink}{\Large$W_{i}'$}}] at (w3) {}; 
			\end{pgfonlayer}
			
			\begin{pgfonlayer}{main}
				\draw[draw=ftpink, smalledge] (w1) -- (w3);
				\draw[draw=ftpink, smalledge] (w3) -- (w2); 
			\end{pgfonlayer}

			\coordinate (d1) at (9,-3.5) {};
			\coordinate (d2) at (12,-2.75) {};
			
			\begin{pgfonlayer}{foreground}
				\node[smallset, draw=ftpink, label={[label distance=2mm] 290: \textcolor{ftpink}{\Large $U_{i+1}'$}}] at (d1) {};
				\node[smallset, draw=ftpink, label={[label distance=3mm] 270: \textcolor{ftpink}{\Large$W_{i+1}'$}}] at (d2) {}; 
			\end{pgfonlayer}
			
			\begin{pgfonlayer}{main}
				\draw[draw=ftpink, smalledge] (d1) -- (d2);
			\end{pgfonlayer}

			\coordinate (e1) at (-5.5,-2.5) {};
			\coordinate (e2) at (-2.5,-3.50) {};
			
			\begin{pgfonlayer}{foreground}
				\node[smallset, draw=ftpink, label={[label distance=2mm] 270: \textcolor{ftpink}{\Large $W_{i-1}'$}}] at (e1) {};
				\node[smallset, draw=ftpink, label={[label distance=3mm] 290: \textcolor{ftpink}{\Large$V_{i}'$}}] at (e2) {}; 
			\end{pgfonlayer}
			
			\begin{pgfonlayer}{main}
				\draw[draw=ftpink, smalledge] (e1) -- (e2);
			\end{pgfonlayer}

		\end{tikzpicture}
		
		\caption{
		The sets of  $\cK_i=\{U_i',V_{i+1}',W_i'\}$ connect the blow-ups $R_i(\cV_i)$ and $R_{i+1}(\cV_{i+1})$.}
		\label{fig;blow-ups}
	\end{figure}
	
	%%%%%% FIGURE END %%%%%%

\begin{proof}[Proof of \cref{thm:dirac-blown-up}]
	Introduce constants $s,c_1,\eta,c_2$ with $\eps \gg 1/s \gg c_1, \eta \gg c_2 \gg 1/n$ such that $\ell = c_1/c_2$ is an integer.
	(Note that the assumption $\eps \gg \eta$ only gives a stronger outcome.)
	Set $m_1 = c_1 \log n$ and $m_2 = c_2 \log n$.
	Now consider an $n$-vertex graph $G$ with $\delta(G) \geq (1/2 + \eps) n$.
	Our goal is to find a $(1 \pm 5\eta)m_2$-blow-up of a cycle, which allows us to conclude with $5\eta, c_2$ playing the rôles of $\eta,c$.
	
	By \cref{lem:simple-blow-up-cover}, the vertices of $G$ are covered by vertex-disjoint quasi $(1\pm\eta)m_1$-balanced blow-ups $\hat R_1(\hat \cV_1),\dots,\hat R_t(\hat \cV_t) \subset G$ such that each reduced graph $\hat R_i$ has order $s+1$ and satisfies $\delta(\hat R_i) \geq (1/2 + \eps/4) (s+1)$.
	We shall find a spanning blow-up of a cycle in each of the blow-ups $\hat R_i(\hat \cV_i)$.
	
	{To this end, we first merge the exceptional cluster with another cluster as follows.}
	Fix $1 \leq i \leq t$, and let $\hat V^\ast \in \hat \cV_i$ be the (unique) singleton cluster of $\hat \cV_i$ with $\hat V^\ast = \hat V_{v^\ast}$ for some $v^\ast \in V(\hat R_i)$.
	The graph $R_i = \hat R_i - v^\ast$ has $s$ vertices, and $\delta(  R_i) \geq (1/2 + \eps/4) (s+1) - 1 > s/2$.
	By Dirac's theorem, $R_i$ contains a Hamilton cycle $C_i$, say with vertices $v_1, \dotsc, v_{s}$.
	Using again that $\delta(\hat R_i) > s/2$ together with the pigeonhole principle, we find $1 \leq j < s$ such that both $v_j$ and $v_{j+2}$ are neighbours of $v^\ast$ in $\hat R_i$ (index computations modulo $s$).
	Let $\cV_i$ be obtained from $\hat \cV_i$ by adding the vertex of $\hat V^\ast$ to the cluster of $v_{j+1}$ and then removing $\hat V^\ast$ from the family.
	Note then that $R_i(\cV_i) \subseteq G$ is a $(1 \pm 2\eta)m_1$-balanced blow-up, where $R_i$ has $s$ vertices and a Hamilton cycle $C_i$.
	Let $\cV$ be the union of the families $\cV_i$ defined in this way over all $1 \leq i \leq t$, and note that $\cV$ is $(1 \pm 2 \eta)m_1$-balanced with $ts$ clusters.

	{Now we can build our spanning blown-up cycle greedily, by connecting two cycles at a time using \Cref{lem:connecting-blow-ups}.
	In each blow-up $\cV_i$, fix two distinct clusters $U_i$ and $V_i$ whose corresponding vertices in $R_i$ are consecutive in $C_i$.
	The following claim allows us to connect each $R_i(\cV_i)$ with $R_{i+1}(\cV_{i+1})$ (index computations modulo $t$).
	\begin{claim}\label{cla:dirac-blown-up-connections}
		For $1 \leq i \leq t$, there are $m_2$-balanced families $\cK_i = \{U_i',V_{i+1}',W_i'\}$ with $U_i' \subset U_i$ and $V_{i+1}' \subset V_{i+1}$ such that the complete bipartite graphs $K(U_i', W_i')$ and $K(V_{i+1}', W_i')$ are contained in $G$.
		In addition, setting $K_i = \bigcup_{1 \leq j < i} \bigcup \cK_j$ for each $1 \leq i \leq t$, the following hold:
		\begin{enumerate}[\upshape (i)]
			\item \label{itm:disjoint} $K_i$ does not intersect any of the clusters $U_{j}$ for $i \leq j \leq t$ and $V_{j}$ for $i < j \leq t+1$ (index computations modulo $t$),
			\item \label{itm:overfull}  $K_i$ intersects any cluster in at most $2 \eta m_1$ vertices.
		\end{enumerate}
	\end{claim}
	\begin{proofclaim}
		For $1 \leq i < t$, let us assume that we have already built connecting $m_2$-sized blow-ups with families $\cK_1, \dotsc, \cK_i$ satisfying the claim.
		Since $t \leq n/(1-\eta)m_1 \leq 2n/m_1$, we have that
		\begin{align*}
			|K_i| \leq 3 t m_2 \leq 6 \frac{n}{c_1 \log n} c_2 \log n = \frac{6}{\ell} n \leq \frac{\eta \eps }{16} n \, .
		\end{align*}
		We define two sets of vertices which we want to avoid when building $\cK_i$.
		Let $Y$ contain the vertices of the clusters $U_{j}$ for $i+1 \leq j \leq t$ and $V_{j}$ for $i+1 < j \leq t+1$.
		Note that $|Y| \leq 2n/((1-\eta)s) \leq \eps n /8$, since $U_i$ and $V_i$ contain a share of at most $2/((1-\eta)s)$ vertices of $\cV_i$.
		We let $X \subset V(G)$ contain the vertices of \emph{saturated} clusters $V \in \cV$, that is, those clusters $V$ such that $|V \cap K_i| \geq \eta m_1$.
		Note that there are at most $|K_i|/(\eta m_1)$ saturated clusters.
		So
		\begin{align*}
			|X| \leq (1 + 2\eta) m_1 \frac{|K_i|}{\eta m_1} \leq 2 \frac{|K_i| }{\eta} \leq \frac{\eps}{8} n \,,
		\end{align*}
		which gives $|X \cup Y \cup K_i|  \leq \eps n / 2$.
		This implies that $G' = G - (X \cup Y \cup K_i)$ has minimum degree at least $(1/2 + \eps/2)n$.
		Apply \Cref{lem:connecting-blow-ups} with $G'$, $U_i$, $V_{i+1}$, $m_1$, $m_2$ playing the rôles of $G$, $U$, $V$, $m$, $m'$ respectively.
		This yields $m_2$-sets $U_i' \subseteq V_i$, $V_{i+1}' \subseteq V_{i+1}$, and~$W_i'$, so that the complete bipartite graphs $K(U_i', W_i')$ and $K(V_{i+1}', W_i')$ are contained in~$G$.
		We finish by setting $\cK_i = \{ U_i', V_{i+1}', W_i' \}$.
		Note that $K_{i+1} = K_i \cup \bigcup \cK_i$ satisfies \ref{itm:disjoint}.
		Moreover, $K_{i+1}$ does not intersect with saturated clusters at all.
		For any non-saturated cluster, $K_{i+1}$ intersects it in at most $2 \eta m_1$ vertices, where $\eta m_1$ vertices account for the fact that the cluster is not saturated, and $3m_2 \leq \eta m_1$ account for the vertices in $K_i$.
		This gives \ref{itm:overfull}.
	\end{proofclaim}}

	Set $\cK = \bigcup_{1 \leq i \leq t} \cK_i$, and let $\cV'$ be the set family obtained from $\cV$ after removing the vertices of $\cK$ from each cluster.
	By construction, $\cK$ is $m_2$-balanced and $\cV'$ is $(1 \pm 4 \eta)m_1$-balanced.
	Moreover, there is a cycle $C$ on $ts+t = t(s+1)$ vertices such that we can index the family $\cK \cup \cV'$ by $V(C)$, so that $C(\cV' \cup \cK)$ is a spanning blow-up of a cycle in $G$, with all clusters of size at least $m_2$.

	To finish, we subdivide the clusters of $\cV'$ into $\ell$ clusters of size as close as possible.
	Since $\cV'$ is $(1 \pm 4 \eta)m_1$-balanced, the clusters obtained from this subdivision all 	have  size $(1 \pm 5 \eta) m_1/\ell = (1 \pm 5 \eta) m_2$.
	Together with the clusters of $\cK$ we obtain a set family~$\cW$, which by construction is $(1 \pm 5 \eta) m_2$-balanced.
	Now, from $C$ we can obtain a new cycle by `winding around' the clusters of $\cV'$ (using each cycle $C_i$ $\ell$ times in each $R_i$), whose vertices are in correspondence with $\cW$, so that $\cW(C) \subseteq G$ is a spanning blow-up of a cycle.
	This gives the desired $(1 \pm 5\eta)m_2$-blow-up of a cycle.
\end{proof}

\section{Tools}

For the proofs of \cref{lem:almost-blow-up-cover,lem:connecting-blow-ups}, we require a few basic results from probability and extremal hypergraph theory.
An \emph{$s$-uniform hypergraph} (or \emph{$s$-graph} for short) $P$ consists of vertices $V(P)$ and edges $E(P)$, where every  edge $e \in E(P)$ is a set of $s$ vertices.
The \emph{minimum degree}~$\delta_1(P)$ is the largest $m$ such that every vertex of $P$ is on at least $m$ edges.

\subsection*{Degree inheritance}\label{sec:inheritance}
It is well-known that sampling an induced subgraph in a Dirac-type graph approximately preserves minimum degree conditions.
We use an $s$-graph $P$, called the \emph{property graph}, to track this phenomenon locally.

\begin{lemma}[Degree inheritance]\label{lem:inheritance}
	Let $\eps \gg 1/s \gg 1/n$.
	Let  $G$ be an $n$-vertex graph with $\delta(G) \geq (1/2 + \eps) n$.
	Let $P$ be the $s$-uniform hypergraph on the vertex set $V(G)$, where an $s$-set $S$ belongs to $P$ if and only if the induced graph $G[S]$ has minimum degree at least $(1/2 + \eps/2)s$.
	Then $\delta_1(P) \geq (1 - e^{-\sqrt{s}}) \tbinom{n-1}{s-1}$.
\end{lemma}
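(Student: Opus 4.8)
The plan is a routine first-moment argument powered by concentration of the hypergeometric distribution. Fix an arbitrary vertex $v \in V(G)$ and let $S = \{v\} \cup T$, where $T$ is a uniformly random $(s-1)$-subset of $V(G) \setminus \{v\}$; then $S$ is a uniformly random $s$-subset of $V(G)$ containing $v$, so
\[
	\frac{\bigl|\{S : v \in S,\ G[S] \in P\}\bigr|}{\binom{n-1}{s-1}} \;=\; \Prob\bigl[G[S] \in P\bigr].
\]
Since $v$ is arbitrary, it suffices to prove $\Prob[G[S] \notin P] \le e^{-\sqrt s}$ for this random $S$, and the minimum over $v$ then gives $\delta_1(P) \ge (1 - e^{-\sqrt s})\binom{n-1}{s-1}$.

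First I would expand by a union bound over vertices,
\[
	\Prob[G[S] \notin P] \;\le\; \sum_{u \in V(G)} \Prob\bigl[u \in S \text{ and } \deg_{G[S]}(u) < (1/2 + \eps/2)s\bigr],
\]
and then make this efficient. The crucial observation is that for $u \neq v$ the probability factors through $\Prob[u \in S] = (s-1)/(n-1)$, so the factor $n$ coming from the number of summands is cancelled. Conditioning on $u \in S$, write $S = \{u,v\} \cup T'$ with $T'$ a uniformly random $(s-2)$-subset of $V(G) \setminus \{u,v\}$; since $v \notin T'$ we have $\deg_{G[S]}(u) \ge |N_G(u) \cap T'|$.

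It remains to bound each conditional probability. Using $\deg_G(u) \ge (1/2+\eps)n$, a direct computation gives $\Exp\bigl[|N_G(u) \cap T'| \mid u \in S\bigr] \ge (1/2+\eps)(s-2)$, which exceeds the threshold $(1/2 + \eps/2)s$ by at least $\eps s/4$ once $s$ is large in terms of $\eps$. A Chernoff--Hoeffding bound for sampling without replacement then yields $\Prob[\deg_{G[S]}(u) < (1/2+\eps/2)s \mid u \in S] \le \exp(-\eps^2 s/8)$, and the term $u = v$ is handled identically via $\deg_{G[S]}(v) = |N_G(v) \cap T|$. Substituting,
\[
	\Prob[G[S] \notin P] \;\le\; \exp(-\eps^2 s/8) + \sum_{u \neq v}\frac{s-1}{n-1}\exp(-\eps^2 s/8) \;=\; s\,\exp(-\eps^2 s/8) \;\le\; e^{-\sqrt s},
\]
where the last inequality holds since $\eps \gg 1/s$.

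I do not expect a genuine obstacle; the argument is standard. The two points requiring a little care are (i) keeping the union bound from being wasteful by pulling out the factor $\Prob[u \in S]$, which is of order $s/n$, so that the estimate reads $\approx s\cdot(\text{exponentially small in }s)$ rather than $\approx n\cdot(\text{exponentially small in }s)$; and (ii) checking that after conditioning on one or two prescribed vertices lying in $S$, the hypergeometric mean still sits a constant multiple of $\eps s$ above the threshold $(1/2+\eps/2)s$ — and here the hierarchy $1/s \gg 1/n$ is precisely what absorbs the lower-order error terms.
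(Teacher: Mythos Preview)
Your proof is correct and follows essentially the same approach as the paper's. The paper states a slightly more general version (for arbitrary relative degree thresholds, yielding $\exp(-Cs)$ with $C = C(\eps)$) and then specialises, but the core argument---fix a vertex, union bound over $u \in S$, pull out the factor $\Pr[u \in S] \approx s/n$ to turn an $n$-term sum into an $s$-term sum, and apply hypergeometric concentration conditioned on $u \in S$---is identical to yours.
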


The proof of \cref{lem:inheritance} follows from a concentration analysis.
For the sake of completeness, the details are presented in \cref{sec:inheritance}.

\subsection*{Blow-ups}

Our approach relies on the existence of complete blow-ups with clusters of polylogarithmic size in dense hypergraphs, as guaranteed by a classic result of Erd\H{o}s~\cite{Erd64}.
We use a refinement of this result for graphs by Nikiforov~\cite{Nik08b,Nik08}, which allows us to find blow-ups with clusters of {logarithmic} size, not just polylogarithmic.
The following is an immediate corollary of \cite[Theorem 1]{Nik08b}.

\begin{lemma}[Graph blow-ups]\label{lem:blowup-niki}
	Let $G$ be an $n$-vertex graph,
	and let $F$ be an $s$-vertex graph.
	Suppose that $G$ contains at least $\eps n^s$ copies of $F$.
	Then $G$ contains a blow-up of~$F$ with all clusters of size $\lfloor \eps^{r^2} \log n \rfloor$.
\end{lemma}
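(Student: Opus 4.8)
The plan is to obtain \cref{lem:blowup-niki} as an immediate consequence of Nikiforov's blow-up theorem \cite[Theorem~1]{Nik08b}, which refines Erd\H{o}s's hypergraph result \cite{Erd64} by improving the guaranteed cluster size from polylogarithmic to logarithmic in~$n$. In the notation of the present paper, that theorem reads: if an $n$-vertex graph contains at least $c\, n^s$ copies of an $s$-vertex graph~$F$, then it contains a balanced blow-up of~$F$ with all clusters of size $\lfloor c^{r^2}\log n\rfloor$ for a suitable constant $r = r(F)$ (one may take $r = s$, the exponent being a fixed polynomial in~$s$). So the argument reduces to matching hypotheses. Two minor points are worth recording. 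First, the number of copies of~$F$ may be counted as labelled edge-preserving embeddings or as subgraphs, and the two differ only by the constant factor $|\operatorname{Aut}(F)| \le s! = O(1)$, which is absorbed into~$c$ by passing from $\eps$ to $\eps/s!$ (still of the same form). Second, if one instead starts from a homomorphism count, the $O(n^{s-1})$ degenerate non-injective maps are again absorbed into the constant once $n$ is large (and the conclusion is vacuous whenever $\lfloor\eps^{r^2}\log n\rfloor = 0$, so there is nothing to prove for small~$n$). With these in place, \cite[Theorem~1]{Nik08b} applied to~$F$ with constant $\eps$ (or $\eps/s!$) gives the statement.

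For completeness I would also recall why the cluster size is genuinely logarithmic, since this is the point of invoking Nikiforov rather than Erd\H{o}s directly. The engine is a K\H{o}v\'ari--S\'os--Tur\'an-type estimate: a bipartite graph with parts of sizes $a$ and~$N$ and edge density at least~$\beta$ contains a complete bipartite subgraph with both sides of size~$t$ for some $t = \Omega(\log N/\log(1/\beta))$, obtained by a convexity argument producing a $t$-set with at least $\beta^{O(t)}N$ common neighbours and then trimming that set to size~$t$. A self-contained proof of the blow-up theorem can then be run by induction on~$s$: pick a vertex~$v$ of positive degree in~$F$ (if~$F$ is edgeless the blow-up is trivial); by averaging, the set~$U$ of vertices of~$G$ lying in at least $(\eps/2)n^{s-1}$ copies of~$F$ as the image of~$v$ satisfies $|U| \ge (\eps/2)n$; for each $u \in U$ the copies through~$u$ are copies of $F - v$ whose images of $N_F(v)$ are confined to~$N_G(u)$, so a coloured version of the inductive statement provides a blow-up of $F - v$ anchored at~$u$, and a further averaging combined with the bipartite estimate aligns these across a large sub-blow-up $V_v \subseteq U$ to yield the blow-up of~$F$. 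The structural feature that permits the logarithmic, rather than merely polylogarithmic, bound is that the $s$-partite $s$-graph of $F$-copies is not arbitrary: it is the intersection of $e(F)$ cylinders, each depending on only two coordinates and governed by~$G$, so the densities encountered in the recursion stay bounded below -- whereas for a general $s$-graph Erd\H{o}s's polylogarithmic bound is essentially optimal.

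The main difficulty here is bookkeeping rather than substance: one must track how the density parameter degrades through the recursion so that the final cluster size has the claimed form $\eps^{r^2}\log n$, and check that taking floors of cluster sizes does not interfere with the induction (it does not, as floors are taken only at the very end). Since the paper derives \cref{lem:blowup-niki} as an immediate corollary of Nikiforov's theorem, in practice I would simply cite \cite[Theorem~1]{Nik08b} and dispatch the two minor points above in a single sentence.
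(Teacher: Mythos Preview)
Your proposal is correct and matches the paper's approach exactly: the paper states the lemma as ``an immediate corollary of \cite[Theorem~1]{Nik08b}'' with no further argument, so your plan to cite Nikiforov and note the harmless discrepancy between labelled and unlabelled copy counts is precisely what is intended. Your additional sketch of why the bound is logarithmic is a nice bonus but not required here.
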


\section{Simple blow-up covers} \label{section:covers}

In this section, we show \cref{lem:simple-blow-up-cover}.
{The proof relies on ideas from our work on hypergraphs~\cite[\S 7]{LS24a}, where a similar lemma is proved.
Here we obtain better bounds on the cluster sizes by applying \cref{lem:blowup-niki} instead of the related result for hypergraphs by Erd\H{o}s~\cite{Erd64}.}
We derive \cref{lem:simple-blow-up-cover} from the following two results.
The first one allows us to cover all but a linear number of vertices with blow-ups.
It first appeared with constant cluster size in the setting of perfect tilings~\cite[Corollary 10.2]{Lan23}.

\begin{lemma}[Almost blow-up cover]\label{lem:almost-blow-up-cover}
	Let $\eps \gg 1/s \gg \eta \gg c \gg 1/n$ and $m = \lfloor c \log n \rfloor$.
	Let $G$ be an $n$-vertex graph with $\delta(G) \geq (1/2 + \eps) n$.
	Then all but at most~$\eta n$ vertices of~$G$ may be covered with vertex-disjoint $m$-balanced blow-ups $\cV_1(R_1),\dots,\cV_t(R_t) \subset G$ such that each reduced graph $R_i$ has $s$ vertices and satisfies  $\delta(R_i) \geq (1/2 + \eps/2) s$.
\end{lemma}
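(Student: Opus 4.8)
The plan is to iterate the following local extraction step: as long as the set $U$ of so-far uncovered vertices satisfies $|U| \geq \eta n$, find one more $m$-balanced blow-up $R(\cV) \subseteq G[U]$ with $|V(R)| = s$ and $\delta(R) \geq (1/2+\eps/2)s$, then remove its vertices from $U$ and repeat. Since each step removes exactly $sm$ vertices, the process terminates after at most $n/(sm)$ steps, leaving at most $\eta n$ vertices uncovered; collecting the extracted blow-ups as $R_1(\cV_1),\dots,R_t(\cV_t)$ gives the conclusion. So the entire content is the single extraction step.

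For the extraction step, fix $U \subseteq V(G)$ with $|U| \geq \eta n$ and set $N = |U|$. First observe that $G[U]$ still has large minimum degree relative to $N$: every vertex of $U$ has at least $\delta(G) - (n - N) \geq (1/2 + \eps)n - n + N \geq (1/2 + \eps - 1/\eta + 1)N$... — this naive bound is too weak, so instead I would argue that $G[U]$ has \emph{average} degree close to $N$ is not enough either. The right move is to work with the property hypergraph. Let $P$ be the $s$-uniform hypergraph on $V(G)$ from \cref{lem:inheritance}, whose edges are the $s$-sets $S$ with $\delta(G[S]) \geq (1/2+\eps/2)s$; by \cref{lem:inheritance}, $\delta_1(P) \geq (1 - e^{-\sqrt s})\binom{n-1}{s-1}$. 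Counting edges of $P$ with all $s$ vertices inside $U$: the number of $s$-sets meeting $V(G) \setminus U$ is at most $(n-N)\binom{n-1}{s-1} \leq (1-\eta)n\binom{n-1}{s-1}$, while $e(P) \geq \frac{n}{s}(1-e^{-\sqrt s})\binom{n-1}{s-1}$, so $P[U]$ has at least $\bigl(\frac{1}{s}(1-e^{-\sqrt s}) - (1-\eta)\bigr)n\binom{n-1}{s-1}$ edges. Since $1/s \gg \eta$ fails in the wrong direction, I instead use that we only need a \emph{positive density} of edges of $P$ inside $U$: because $\eps \gg 1/s$, we have $e^{-\sqrt s} \leq \eps$, and a short computation (using $N \geq \eta n$ and $1/s \gg \eta$ — here one must be careful, and the cleanest route is to note that almost every vertex of $U$ lies in $\geq (1 - 2e^{-\sqrt s})\binom{N-1}{s-1}$ edges of $P[U]$ by a second moment / averaging argument, exactly as in the proof of \cref{lem:inheritance}) shows $P[U]$ contains at least $\eps N^s$ edges, i.e. $G[U]$ contains at least $\eps N^s$ induced copies of some graph $F$ on $s$ vertices with $\delta(F) \geq (1/2+\eps/2)s$ — by pigeonhole over the finitely many isomorphism types of $s$-vertex graphs, one such $F$ occurs $\geq \eps N^s / 2^{\binom s2}$ times, and since $\eps \gg 1/s$ this is still $\geq \eps' N^s$ for a constant $\eps' = \eps'(\eps,s) > 0$.

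Now apply \cref{lem:blowup-niki} to $G[U]$ with this $F$ and density $\eps'$: since $N \geq \eta n$, we get a blow-up of $F$ in $G[U]$ with all clusters of size $\lfloor (\eps')^{s^2} \log N \rfloor \geq \lfloor c \log n \rfloor = m$, where the inequality holds because $c \ll \eta, \eps$ (so $c \leq (\eps')^{s^2}$ and $\log N \geq \log(\eta n) \geq (1 - o(1))\log n$). Truncate each cluster down to size exactly $m$; this yields an $m$-balanced blow-up $R(\cV) \subseteq G[U]$ with $R \cong F$, hence $|V(R)| = s$ and $\delta(R) \geq (1/2+\eps/2)s$, completing the extraction step and hence the proof.

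The main obstacle is the bookkeeping in the middle paragraph: one must verify that restricting the property hypergraph $P$ to a set $U$ of size only $\eta n$ still leaves a constant fraction of all possible $s$-subsets of $U$ as edges of $P$. The subtlety is that $\delta_1(P)$ being close to $\binom{n-1}{s-1}$ controls degrees in $P$, not degrees in $P[U]$, and a vertex could conceivably have all its non-edges of $P$ concentrated on $s$-sets inside $U$. This is handled by the standard averaging trick — at most a $2e^{-\sqrt s}$-fraction of vertices of $U$ can be "bad" in this sense — but getting the constants to line up with the hierarchy $\eps \gg 1/s \gg \eta \gg c \gg 1/n$ requires care, and is the one place where the precise form of the bound $\delta_1(P) \geq (1 - e^{-\sqrt s})\binom{n-1}{s-1}$ (rather than, say, $(1-\eps)\binom{n-1}{s-1}$) is used.
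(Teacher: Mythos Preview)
There is a genuine gap in the extraction step, and it sits exactly where you flag the argument as delicate. Your proof requires that for every $U \subset V(G)$ with $|U| = N \geq \eta n$, the restricted property hypergraph $P[U]$ has at least $\eps' N^s$ edges for some $\eps' = \eps'(\eps,s) > 0$. This is false. Since $\eta < 1/2 - \eps$, the graph $G$ may contain an independent set $I$ of size $\eta n$; taking $U = I$, every $s$-set $S \subset U$ induces the empty graph, so $P[U]$ has no edges and $G[U]$ contains no blow-up of any graph with an edge. Neither of your proposed fixes rescues this. Re-running the hypergeometric argument of \cref{lem:inheritance} inside $U$ shows only that a random $(s-1)$-set in $U$ captures roughly a $\deg_{G[U]}(v)/|U|$ fraction of $v$'s neighbours, not a $\deg_G(v)/n$ fraction, and the former can be zero. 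And the global non-edge count gives only that $P$ has at most $e^{-\sqrt s}\binom{n}{s}$ non-edges, whereas $\binom{N}{s} \leq \eta^s\binom{n}{s} \ll e^{-\sqrt s}\binom{n}{s}$ in the hierarchy $1/s \gg \eta$, so all of $\binom{U}{s}$ may consist of non-edges. Nor is this merely a worst-case $U$ that the greedy process would never reach: if $G$ is an independent set $I$ of size $2\eta n$ joined completely to a clique on the remaining $(1-2\eta)n$ vertices (so $\delta(G) = (1-2\eta)n > (1/2+\eps)n$), an adversarial run of your procedure pulls blow-ups exclusively from the clique until fewer than $sm$ of its vertices remain, leaving $|U| > \eta n$ with $G[U]$ too sparse to continue.

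The paper's proof avoids greedy global iteration altogether. It applies a weak hypergraph regularity statement (\cref{lem:almost-perfect-regular-tuple-tiling}) to the property hypergraph $P$ to obtain, in one shot, a tiling of all but $\eta n$ vertices by $(\rho,\eta)$-lower-regular balanced $s$-tuples of \emph{linear} size $\alpha n$. Lower-regularity is precisely the guarantee that the density of $P$-edges is preserved on every large subtuple, so inside each fixed tuple one may repeatedly pigeonhole over the at most $2^{s^2}$ labelled reduced graphs and invoke \cref{lem:blowup-niki} to peel off $m$-balanced blow-ups until only a $\rho$-fraction of each cluster remains. The regularity decomposition supplies exactly the robustness-under-deletion that a bare greedy argument cannot.
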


The next result allows us to pick up the remaining vertices.

\begin{lemma}[Rooted blow-ups]\label{lem:rooted-blow-ups}
	Let $\eps \gg 1/s \gg c \gg n$, and let $n^{1/2} \leq m_1 \leq (\eps/4)n$ and $m_2 = c \log m_1$.
	Let $G$ be an $n$-vertex graph with $\delta(G) \geq (1/2 + \eps)n$.
	{Let $X \subset V(G)$ be an $m_1$-set.}
	Then $G$ contains an $m_2$-balanced blow-up $T(\cW)$ such that $T$ has order $s$ and $\delta(T) \geq (1/2 + \eps/2)s$.
	Moreover, there is $W \in \cW$ with $W \subset X$ and $W' \cap X = \es$ for all $W' \in \cW$ distinct from $W$.
\end{lemma}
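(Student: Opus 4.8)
The plan is to place the $s-1$ ``internal'' clusters inside $G-V$ and the single remaining ``root'' cluster inside $V$, assembling the whole configuration from a large supply of rooted $s$-sets by means of a rooted form of Nikiforov's theorem. Since $|V|=m_1\le(\eps/4)n$, deleting $V$ costs at most $m_1$ in the minimum degree, so $G':=G-V$ has $n':=n-m_1\ge(1-\eps/4)n$ vertices and relative minimum degree at least $\frac{(1/2+\eps)n-m_1}{n-m_1}\ge\tfrac12+\tfrac{7\eps}{8}$. Fix $v\in V$, let $S$ be a uniformly random $(s-1)$-subset of $V(G')$, and set $S^{+}:=S\cup\{v\}$. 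Running the concentration behind \cref{lem:inheritance} directly — hypergeometric tail bounds for $\deg_{G[S^{+}]}(v)$ and, after exposing each $u\in S$ in turn, for $\deg_{G[S^{+}]}(u)$ — one sees that all these degrees have mean at least $(\tfrac12+\tfrac34\eps-o(1))(s-1)$, which beats $(\tfrac12+\tfrac\eps2)s$ by $\Omega(\eps s)$; a Chernoff bound together with a union bound over the $s$ vertices of $S^{+}$ then gives $\Prob[\delta(G[S^{+}])\ge(\tfrac12+\tfrac\eps2)s]\ge\tfrac12$. (It is essential to extract the threshold $\tfrac12+\tfrac34\eps$ from the concentration rather than settling for the conservative $\tfrac12+\tfrac\eps2$ of \cref{lem:inheritance}, since the loss from deleting $V$ must be absorbed.) Summing over $v\in V$, the graph $G$ contains at least $\tfrac12 m_1\binom{n'}{s-1}\ge\eps_1 m_1 n^{s-1}$ rooted $s$-sets — sets $S^{+}$ with $|S^{+}\cap V|=1$ and $\delta(G[S^{+}])\ge(\tfrac12+\tfrac\eps2)s$ — for some $\eps_1=\eps_1(\eps,s)>0$; pigeonholing over the at most $2^{\binom s2}$ isomorphism types of an $s$-vertex graph with a marked vertex, some type $(T,w_0)$, necessarily with $\delta(T)\ge(\tfrac12+\tfrac\eps2)s$, occurs in at least $\eps_0 m_1 n^{s-1}$ of them, where $\eps_0:=\eps_1 2^{-\binom s2}$.

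I would now convert this supply of rooted copies of $(T,w_0)$ into the desired blow-up via a rooted version of \cref{lem:blowup-niki}: if $G$ contains at least $\eps_0 m_1 n^{s-1}$ copies of $T$ with $w_0$ mapped into $V$ and the other vertices into $V(G)\setminus V$, then $G$ contains a blow-up $T(\cW)$ in which the cluster indexed by $w_0$ lies in $V$ and has size $\lfloor c'\log m_1\rfloor$ while every other cluster lies in $V(G)\setminus V$ and has size $\lfloor c'\log n\rfloor$, for some $c'=c'(\eps_0)>0$. As $c\ll\eps,1/s$ we may assume $c\le c'$, so shrinking each cluster to size $m_2=c\log m_1\le c'\log m_1$ produces exactly the blow-up required by the lemma, with $W$ the subcluster indexed by $w_0$.

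The heart of the argument is this last step, since \cref{lem:blowup-niki} carries no rooted conclusion. The rooted version should follow by rerunning the density-increment / dependent-random-choice argument of Nikiforov (or of Erdős) while keeping track of the colour class constrained to lie in $V$; the delicate point is verifying that the $w_0$-cluster can still be taken of size $\Theta(\log m_1)$ — which is exactly why the conclusion reads $c\log m_1$ rather than $c\log n$, since it is the size of $V$, not of $G$, that governs the root cluster. An alternative I would attempt first, avoiding the internals of Nikiforov's proof: choose $t=O_{\eps_0}(1)$ random internal $(s-1)$-tuples, let $W_0\subseteq V$ be the set of vertices simultaneously completing all of them to a rooted copy of $(T,w_0)$, observe $|W_0|\ge m_1\eps_0^{\,t}\gg m_2$ by convexity of binomial coefficients, and then run a dependent-random-choice argument on the $V(G)\setminus V$ side to produce disjoint clusters $X_1,\dots,X_{s-1}$ of size $\gg m_2$ whose bipartite graphs to $W_0$ prescribed by $T$ are all complete; the common-neighbourhood estimates here are routine but fiddly. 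The secondary point to watch is that the minimum-degree accounting in the first step is tight.
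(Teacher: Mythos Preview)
Your counting in the first paragraph is fine and essentially matches the paper's opening moves (the paper removes the edges inside $V$ rather than deleting $V$ itself, but the effect is the same, and it likewise pigeonholes down to a single labelled type $(T,w_0)$). The real content is the second paragraph, and you correctly flag the difficulty yourself: a rooted blow-up lemma is needed, and neither of your sketched routes quite does the job as written.

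Your alternative (b) founders on the order of operations. If you fix $t=O_{\eps_0}(1)$ random internal $(s-1)$-tuples and let $W_0\subset V$ be their common extension set, then indeed $|W_0|=\Omega(m_1)$ --- but now each cluster $X_j$ with $j\in N_T(w_0)$ must lie inside the \emph{common $G$-neighbourhood of all of $W_0$}, and for $|W_0|$ this large that set may well be empty. Shrinking $W_0$ down to size $\approx m_2$ forces $t\approx\log m_1$, at which point your $t$ unstructured $(s-1)$-tuples no longer assemble into a blow-up on the non-root side. Approach (a) is not unreasonable, but you give no indication of how the two length scales $m_1$ and $n$ are to be carried through Nikiforov's increment simultaneously.

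The paper reverses your order: first apply \cref{lem:blowup-niki} to $G-V$ alone to obtain a blow-up of $R':=T-w_0$ with clusters of size $t=\Theta(\log n)$, having first restricted attention to those $(s-1)$-tuples inducing $R'$ that admit $\Omega(|V|)$ extensions into $V$; then take $t$ vertex-disjoint transversals $R'_1,\dots,R'_t$ of this blow-up and form the bipartite graph $B$ on $V\times\{R'_1,\dots,R'_t\}$ with $u\sim R'_i$ whenever $u$ extends $R'_i$ to a copy of $T$. Because the right-hand side of $B$ has only $t=\Theta(\log n)$ vertices while the left has $|V|=m_1\ge n^{1/2}$, a Kővári--Sós--Turán double count (exactly as in the proof of \cref{lem:connecting-blow-ups}) extracts a $K_{p,p}$ in $B$ with $p=c\log n\ge m_2$: the $p$ vertices on the $V$ side give the root cluster, and the $p$ chosen transversals carve out the non-root clusters. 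The decisive point you are missing is that by building the non-root side \emph{first} one keeps it small, so the final extraction of the root cluster is a genuinely $2$-uniform Zarankiewicz problem --- where the logarithmic bound is available --- rather than an $(s-1)$-uniform one, where it is not.
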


{We defer proofs of \cref{lem:almost-blow-up-cover,lem:rooted-blow-ups} to \cref{sec:almost-blow-up-cover} and~\cref{sec:blowups} and continue to show \cref{lem:simple-blow-up-cover}.
To explain the approach, consider constants $ c_1 \gg c_2 \gg c_3$, and let $G$ be a graph as in the statement.
We work with clusters of sizes $m_i = c_i \log n$.
To begin, take $m_1$-balanced blow-ups $R_1(\cV_1),\dots,R_{p}(\cV_p) \subset G$ obtained from \cref{lem:almost-blow-up-cover} covering all but $o(n)$ vertices.
In particular, each $R_i$ has $s$ vertices and inherits the minimum degree condition up to a small error.
It remains to include the uncovered vertices, which we denote by $X$.
To this end, we iteratively apply \cref{lem:rooted-blow-ups} to find additional disjoint $m_2$-balanced blow-ups $T_1(\cW_1),\dots,T_{p}(\cW_q)$ each covering $m_2$ vertices of $X$ that have (in total) only a few vertices in each cluster of the families $\cV_i$.
As before, each $T_j$ has $s$ vertices and approximately inherits the minimum degree condition.
We delete the vertices of each~$\cW_j$ from each~$\cV_i$ to ensure that the families are disjoint.
Note that this leaves the families~$\cV_i$ still nearly balanced.
At the end of this process, only $O(n^{3/4})$ vertices of $Y \subset X$ remain uncovered.
To finish, we cover each $u \in Y$ with a singleton cluster of a quasi $m_3$-balanced blow-up $S_u(\cU_u)$, which is again possible by an application of \cref{lem:rooted-blow-ups}.
Here, each $S_\ell$ has $s+1$ vertices and approximately inherits the minimum degree condition.
As before, one can ensure that the families $\cU_u$ are disjoint from the families $\cV_i$ and $\cW_j$ by controlling the intersection and increasing the imbalance of $\cV_i$ and $\cW_j$ slightly.
This leaves us with a complete cover of $G$ using three types of families, whose clusters are of size roughly $m_1$, roughly $m_2$ and, lastly, of size $m_3$ with one singleton cluster.
To finish, we subdivide the larger families until all families have one singleton cluster and $s$ other clusters, which are all of size roughly $m_3$.
We remark that only the third family we obtain is quasi balanced and not balanced, and therefore adding the singleton cluster to the sets in the first two families is somewhat unnecessary.
However, doing so helps us to simplify the outcome, because we obtain a partition made only of the same type of blow-ups.
Let us now turn to the details.}

\begin{proof}[Proof of \rf{lem:simple-blow-up-cover}]
	Introduce $c_1, c_2, c_3$ with $\eta \gg c_1 \gg c_2 \gg c_3 \gg 1/n$.
	Let $m_i = c_i \log n$, for $i \in \{1, 2, 3\}$.
	By \cref{lem:almost-blow-up-cover} (with $\eta^3$ playing the rôle of $\eta$), all but at most $\eta^3 n$ vertices of $G$ may be covered with pairwise vertex-disjoint $m_1$-balanced blow-ups $R_1(\cV_1),\dots,R_{p}(\cV_p) \subset~G$, where each $R_i$ has order $s$ and $\delta(R_i) \geq (1/2 + \eps/2)s$.
	The uncovered vertices, denoted by $X$, are added onto blow-ups in two steps.
	{We first include most vertices of $X$ as follows.
	\begin{claim}\label{cla:simple-blow-up-cover-almost}
		For $q \leq \eta^3 n/m_2$, there are vertex-disjoint blow-ups $T_1(\cW_1),\dots,T_{q}(\cW_q)$ such that
		\begin{enumerate}[\upshape (i)]
			\item each $\cW_i$ is an $m_2$-balanced set family with at least one cluster contained in $X$,
			\item \label{itm:simple-blow-cover-degree} each $T_i$ is an $s$-vertex graph with $\delta(T_i) \geq (1/2 + \eps/2)s$,
			\item \label{itm:simple-blow-cover-intersection} $\bigcup_i \bigcup \cW_i$ covers at most $(\eta/4) m_1$ vertices of any cluster of $\cV_1,\dots,\cV_p$ and
			\item $\bigcup_i \bigcup \cW_i$ covers all but at most $n^{3/4}$ vertices of $X$.
		\end{enumerate}
	\end{claim}
	\begin{proofclaim}
		We construct the blow-ups one after another, covering at least $m_2$ vertices of $X$ in each step until all but at most $n^{3/4}$ vertices are covered.
		This process stops after at most $q = \eta^3 n/m_2$ steps.		
		Suppose we have already constructed blow-ups $T_1(\cW_1),\dots,T_{\ell}(\cW_\ell)$ for $1 \leq \ell < q$ satisfying part~\ref{itm:simple-blow-cover-degree} and~\ref{itm:simple-blow-cover-intersection}, which cover the vertex set $ W = \bigcup_{j=1}^{\ell} \bigcup \cW_j$ and at least $\ell m_2$ vertices of $X$.
		If $|X \setminus W| \leq n^{3/4}$, we stop the process and finish with $\ell$ playing the rôle of $q$.
		Otherwise, we shall find a blow-up $T_{\ell+1}(\cW_{\ell+1}) \subset G - W$ satisfying part~\ref{itm:simple-blow-cover-degree} and~\ref{itm:simple-blow-cover-intersection}, which in addition covers $m_2$ vertices of $X$.
		Indeed, let us call a cluster $V$ of a set family $\cV_i$ \emph{saturated} if $V$ intersects in at least $(\eta/8) m_1$ elements with the vertices of $W$.
		Note that $|W| \leq sm_2 q \leq s \eta^3 n \leq \eta^2 n \leq (\eps/4) n$.
		So there are at most $h = \eta^2 n/ (\eta m_1/8) \leq 8\eta n/m_1$ saturated clusters.
		Denote by $B \subset V(G)$ the union of the vertices of the saturated clusters.
		It follows that $|B| \leq h m_1 \leq 8 \eta n \leq (\eps/4) n$.
		Let $G' = G-W-B$, and note that $G'$ has minimum degree at least $(1/2+\eps/2)n$.
		We may thus apply \cref{lem:rooted-blow-ups} with an $\lfloor n^{3/4} \rfloor$-set $X' \subset X$ playing the rôle of $X$, and~$2 c_2$ playing the rôle of~$c$.
		Since $2 c_2 \log( n^{3/4} ) \geq m_2$, this gives a blow-up $T_{\ell+1}(\cW_{\ell+1})$ such that each $\cW_i$ is an $m_2$-balanced set family with exactly one cluster contained in $X' \subset X$.
		Clearly, $T_{\ell+1}(\cW_{\ell+1})$ is disjoint from $W = \bigcup_{j=1}^{\ell} \bigcup \cW_j$.
		Moreover,~$T_{\ell+1}$ is an $n$-vertex graph satisfying $\delta(T_i) \geq (1/2 + \eps/2)s$, which gives part~\ref{itm:simple-blow-cover-degree}.
		Let $W' = W \cup \bigcup \cW_{\ell+1}$.
		Then $W'$ intersects saturated clusters in at most $(\eta/4) m_1$ vertices and unsaturated clusters in at most $(\eta/8) m_1 + sm_2 \leq (\eta/4) m_1$ vertices, in accordance with part~\ref{itm:simple-blow-cover-intersection}.
	\end{proofclaim}}
	{Fix $T_1(\cW_1),\dots,T_{q}(\cW_q)$ as in \cref{cla:simple-blow-up-cover-almost} and denote by $Y \subset X$ the set of at most $n^{3/4}$ uncovered vertices.
	Denote by $W$ the set of vertices covered by $\cW_1,\dots, \cW_q$, and observe that $|W| \leq q s m_2 \leq \eta^2 n$.
	We finish the covering at follows.}
	{\begin{claim}\label{cla:simple-blow-up-cover-complete}
		There is a family of pairwise disjoint blow-ups $S_y(\cU_y)$ for $y \in Y$ that together cover $Y$ such that each $S_i$ is a graph on $s+1$ vertices with $\delta(S_i) \geq (1/2 + \eps/2)s$.
		Moreover, $U =\bigcup_{y\in Y} \bigcup \cU_y$ is disjoint from $W$ and has at most $(\eta/4)m_1$ vertices of any cluster of $\mathcal{V}_1, \dotsc, \mathcal{V}_p$.
	\end{claim}
	\begin{proofclaim}
		We construct the blow-ups one after another.
		Suppose we have so far covered vertices $Y' \subset Y$ with blow-ups $\{S_y(\cU_y)\}_{y \in Y'}$, where each blow-up $S_y(\cU_y)$ covers  $y$ and no other vertex from $Y$.
		Suppose furthermore that $U' = \bigcup_{y\in Y'} \bigcup \cU_y$ is disjoint from $W$ and $U'$ has at most $(\eta/4)m_1$ vertices of any cluster of $\mathcal{V}_1, \dotsc, \mathcal{V}_p$.
		We have $|U'| \leq |Y|(s+1)m_2 \leq (s+1)n^{3/4} \log n \leq \eps n / 16$.
		Say that a cluster $V$ of $\bigcup_i \mathcal{V}_i$ is \emph{saturated} if $|V \cap U'| \geq (\eta / 8) m_1$.
		Denote by $h$ the number of saturated clusters and note that $h \leq |U'| / (\eta m_1 / 8)$.
		Hence, the union $D \subset V(G)$ of the vertices of saturated clusters has size at most $h m_1 \leq (\eps / 16) n$.
		So we have
		\begin{align*}
			|W| + |U'| +|Y|+|D| \leq \eta^2 n + \eps n / 16 + n^{3/4} + \eps n / 16 \leq (\eps /4)n\,.
		\end{align*}
		Fix an uncovered vertex $y \in Y \sm Y'$.
		Let $G'$ be an auxiliary graph obtained from $G- W -U'- Y - D$ by replacing $y$ with a set of $n^{3/4}$ copies $X$ of itself.
		Note that $\delta_1(G') \geq (1/2 + \eps/2) v(G')$.
		We may thus apply \cref{lem:rooted-blow-ups} to find an $m_3$-balanced blow-up $S_y(\hat \cU_y)$, where $S$ has $s+1$ vertices and satisfies $\delta(S_y) \geq (1/2 + \eps/2)s$.
		Delete from $\hat \cU_y$ all but one copy of $y$ to obtain a quasi $m_3$-balanced family $\cU_y$.
		Clearly, $S_y(  \cU_y)$ covers exactly one vertex in $Y$ and is disjoint from the blow-ups $\{S_{y'}(\cU_{y'})\}_{y' \in Y'}$ that were selected before.
		Moreover, the clusters $\cU_y$ are disjoint from $W$ and have at most $s m_3 +1 \leq(\eta/8)m_1$ vertices in any unsaturated cluster of $\cV_1,\dots,\cV_p$.
		Moreover, $\cU_y$ is disjoint from the saturated clusters.
		It follows that $\bigcup \cU_y \cup U'$ has at most $(\eta/4)m_1$ vertices of any cluster of $\mathcal{V}_1, \dotsc, \mathcal{V}_p$.
	\end{proofclaim}}
	{Fix blow-ups $S_y(\cU_y)$ for $y \in Y$ as in \cref{cla:simple-blow-up-cover-complete}.
	Let $U = \bigcup_{y \in Y} \bigcup \cU_y$.
	We delete the vertices of each $\cW_i$ and $\cU_y$ from each $\cV_j$, keeping the names for convenience.
	Note that afterwards, $\cV_j$ is still $(1\pm\eta/2)m_1$-balanced by \cref{cla:simple-blow-up-cover-almost}\ref{itm:simple-blow-cover-intersection} and \cref{cla:simple-blow-up-cover-complete}.}
	
	To finish, we have to address the following two issues.
	Firstly, the clusters of the families $\cW_i$ and $\cV_i$ are still much larger compared to $\cU_i$.
	Secondly, the families do not have singleton clusters as required for quasi balancedness.
	To deal with this, we first split each $\cW_i$ and each $\cV_j$ into $(1\pm \eta/2)((s-1)m_3)$-balanced families.
	(This can be done greedily.)
	Then, we partition each of these $(1\pm \eta/2)((s-1)m_3)$-balanced families into~$s$ quasi $(1\pm \eta)m_3$-balanced families.
	Indeed, assuming the family is $\{X_1, \dotsc, X_s\}$, one can partition each $X_i$ into $s$ sets, one of size $1$, and the other ones of size as equal as possible, that is, of size $(1 \pm \eta)m_3$.
	Then we form $s$ new families by including in each one exactly one singleton and $s-1$ many $(1 \pm \eta)m_3$-sets.
	Hence every family is now quasi $(1\pm \eta)m_3$-balanced, and we can finish with $c=c_3$.
	{Lastly, note that each of the reduced graphs $R_i$, $T_j$ and $S_y$ still has  minimum degree at least $(1/2 + \eps/4)(s+1)$ after this process.}
\end{proof}

\section{Almost blow-up covers}\label{sec:almost-blow-up-cover}

{Here we show \cref{lem:almost-blow-up-cover}, which allows us to cover most vertices of a Dirac-type graph $G$ with blow-ups whose reduced graphs approximately inherit the minimum degree condition.
Let us outline the proof.
Consider the property $s$-graph $P$ defined in \cref{lem:inheritance}, which tracks the $s$-sets $S \subset V(G)$ that inherit the minimum degree conditions of $G$.
By degree inheritance, $P$ has large enough minimum degree to contain an almost perfect matching.
In fact, we shall prove in \cref{lem:almost-perfect-regular-tuple-tiling} that one can even cover most vertices of $P$ with pairwise disjoint quasirandom $s$-partite $s$-graphs.
Afterwards, we cover most of the vertices of each quasirandom $s$-graph separately with suitable blow-ups using \cref{lem:blowup-niki}.}

To formalise this discussion, we introduce the following notion of (weak) quasirandomness for $s$-uniform hypergraphs.
For an $s$-graph~$P$, the \emph{density} of a tuple $(V_1,\dots,V_k)$ of pairwise disjoint vertex sets is
\begin{align*}
	d_P(V_1,\dots,V_k) = \frac{e_P(V_1,\dots,V_k)}{|V_1|\dotsb|V_k|}.
\end{align*}
We say that $(V_1,\dots,V_k)$ is \emph{$(\rho,d)$-lower-regular} if for $d=d_P(V_1,\dots,V_k)$ and all choices of $X_1\subset V_1,$ $\dots,$ $X_k \subset V_k$ satisfying $|X_1| \geq \rho |V_1|,\,\dots,\,|X_k| \geq \rho |V_k|$, we have
$ d_P(X_1,\dots,X_k)  \geq d- \rho$.
Denote by $\cQ(s,m,\rho,d)$ the set of $(\rho,d')$-lower-regular $s$-tuples which are $m'$-balanced, with $m' \geq m$, and $d' \geq d$.

Our next lemma shows that one can partition most of the vertices of a hypergraph with large enough minimum degree into balanced lower-regular tuples.
This is a straightforward consequence of the Weak Hypergraph Regularity Lemma~\cite[Theorem B.2]{Lan23}.
A simpler, alternative proof of this fact appears in our related work on hypergraphs~\cite[\S 7.3]{LS24a}.
For the sake of completeness, the details are presented in \cref{sec:tiling}.

For an $s$-graph $P$ and a family $\cF$ of $s$-graphs, an \emph{$\cF$-tiling} in $P$ is a set of pairwise vertex-disjoint $k$-graphs $F_1,\dots,F_\ell \subset P$ with $F_1,\dots,F_\ell \in \cF$.

\begin{lemma}[Almost perfect tiling]\label{lem:almost-perfect-regular-tuple-tiling}
	Let $1/s \gg \eta, \rho \gg \alpha \gg 1/n$ and $m=\alpha n$.
	Then every $n$-vertex $s$-graph $P$ with $\delta_1(P) \geq \left(1-1/s^2 \right) \binom{n-1}{s-1}$ contains a $\cQ(s,m,\rho, 2 \eta)$-tiling that covers all but $\eta n$ vertices.
\end{lemma}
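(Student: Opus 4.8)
The plan is to apply the Weak Hypergraph Regularity Lemma to the $s$-graph $P$, extract from the regular partition a "reduced hypergraph" that inherits a large minimum degree, and then find an almost-perfect fractional matching in the reduced hypergraph that we round to an integral tiling by regular $s$-tuples. First I would invoke \cite[Theorem B.2]{Lan23} with suitable auxiliary parameters (chosen so that $1/s \gg \eta, \rho \gg 1/T_0 \gg \alpha \gg 1/n$, where $T_0$ bounds the number of parts) to obtain a vertex partition $V(P) = V_0 \cup V_1 \cup \dots \cup V_k$ into an exceptional set $|V_0| \le \alpha n$ and $k$ parts of common size, such that all but a $\rho$-fraction of the $k$-subsets $\{i_1,\dots,i_s\}$ index an $s$-tuple $(V_{i_1},\dots,V_{i_s})$ that is $(\rho, d_{i_1\dots i_s})$-lower-regular for some density $d_{i_1\dots i_s}$. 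Since each part has size $\ge (1-\alpha)n/k \ge m = \alpha n$ and each part can be trimmed to a common size without affecting regularity, these tuples are $m'$-balanced with $m' \ge m$.

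Next I would pass to the reduced $s$-graph $\cR$ on vertex set $[k]$, putting an edge on $\{i_1,\dots,i_s\}$ whenever the corresponding tuple is lower-regular with density $d_{i_1\dots i_s} \ge 2\eta$. The key estimate is that $\delta_1(P) \ge (1 - 1/s^2)\binom{n-1}{s-1}$ forces $\delta_1(\cR)$ to be close to $\binom{k-1}{s-1}$: a standard double-counting shows that, averaging over the choice of representative vertices, most $s$-sets of parts have hypergraph density $\ge 1 - 2/s^2$ (say), and deleting the few non-lower-regular tuples and those with density $< 2\eta$ removes only an $O(\rho + \eta)$-fraction, so $\delta_1(\cR) \ge (1 - 1/s)\binom{k-1}{s-1}$ with room to spare. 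A hypergraph with minimum degree this close to complete has an almost-perfect matching — indeed one can either quote a Pippenger–Spencer / absorption-type result, or simply observe greedily that as long as more than an $\eta k$-fraction of vertices remain uncovered, the induced sub-hypergraph on the uncovered set still has positive minimum degree (since each uncovered vertex lies in $\ge (1-1/s)\binom{k-1}{s-1} - \binom{k-1}{s-1} + \binom{\eta k}{s-1}$-ish many edges inside the uncovered set), hence contains an edge we can add to the matching. This yields a matching $M$ in $\cR$ covering all but $\le \eta k$ vertices of $[k]$.

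Finally, each edge of $M$ corresponds to an $(\rho, d')$-lower-regular $m'$-balanced $s$-tuple with $d' \ge 2\eta$, i.e.\ an element of $\cQ(s, m, \rho, 2\eta)$, and distinct edges of $M$ use disjoint parts, so the associated tuples are pairwise vertex-disjoint; together they form the desired $\cQ(s,m,\rho,2\eta)$-tiling. The vertices left uncovered lie in $V_0$ (size $\le \alpha n \le \eta n/2$) together with the parts not met by $M$ (total size $\le \eta k \cdot (n/k) = \eta n$, and one shrinks $\eta$ slightly at the start to absorb the $V_0$ term), so all but at most $\eta n$ vertices are covered.

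The main obstacle I expect is bookkeeping the density inheritance from $\delta_1(P)$ to $\delta_1(\cR)$ cleanly: one must be careful that the lower-regularity of a tuple is witnessed at a density $d' \ge 2\eta$ that is \emph{genuinely} the hypergraph density of that tuple (not merely a parameter), and that the $\rho$-slack in the definition of lower-regularity is compatible with the $\rho$-fraction of exceptional tuples coming out of the regularity lemma — so the hierarchy $\eta, \rho \gg \alpha$ has to be arranged so that $\rho$ is both small enough for the reduced-graph degree bound and large enough to serve as the regularity parameter. Everything else is routine averaging and a greedy matching argument.
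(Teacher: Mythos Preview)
Your overall strategy---apply weak hypergraph regularity, pass to a reduced $s$-graph $\cR$ on the parts, and find a near-perfect matching there---is sound and is in fact the route the paper alludes to when it says the lemma is ``a straightforward consequence of the Weak Hypergraph Regularity Lemma''. The paper, however, presents the \emph{other} route: an incremental argument (\cref{lem:larger-matching}) that avoids the regularity lemma altogether. Starting from the empty tiling, it repeatedly chops the current clusters and the uncovered vertices into equal pieces of some size $m_2$, defines a reduced $s$-graph $R$ on those pieces, obtains a \emph{perfect} matching in $R$ via Daykin--H\"aggkvist (\cref{lem:matching}), and then, inside each matching edge, extracts a fresh lower-regular tuple by a density-increment step (\cref{lem:density-regular-tuple}) while recycling what remains of the old tiles. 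Each round covers an additional $\Theta(\eta^2)n$ vertices, so $O(\eta^{-2})$ rounds suffice. Your approach is conceptually shorter (one regularisation plus one matching), whereas the paper's keeps the proof self-contained.

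That said, your matching step has a genuine gap. The greedy argument you sketch cannot reach $\eta k$: with $\delta_1(\cR)\ge(1-1/s)\binom{k-1}{s-1}$ (or even $(1-1/s^2)\binom{k-1}{s-1}$), an uncovered vertex has at least $\binom{|U|-1}{s-1}-\tfrac{1}{s}\binom{k-1}{s-1}$ edges inside the uncovered set $U$, and this is already negative once $|U|/k$ drops below roughly $s^{-1/(s-1)}$, a constant bounded away from $0$ for every $s\ge 2$ and tending to $1$ as $s\to\infty$. Since the hierarchy demands $1/s\gg\eta$, greedy stalls long before $|U|=\eta k$. The repair is easy and is exactly what the paper does in its reduced graph: discard at most $s-1$ parts so that $s\mid k$, note that $1-1/s^2>1-1/s$, and invoke Daykin--H\"aggkvist for a perfect matching in $\cR$ (your Pippenger--Spencer alternative would also work, since $\cR$ is nearly complete). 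One further bookkeeping point: weak regularity only bounds the \emph{global} number of irregular $s$-tuples, so a given part could in principle sit in many of them; discard the $O(\sqrt{\rho}\,k)$ offending parts before arguing about $\delta_1(\cR)$.
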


Given this, we can easily derive \Cref{lem:almost-blow-up-cover}, the main result of this section.

\begin{proof}[Proof of \Cref{lem:almost-blow-up-cover}]
	Introduce $\alpha$ with $\eta \gg \alpha \gg c$, and set $\rho = \eta/2$.
	We let $P$ be the auxiliary $s$-uniform hypergraph on the vertex set $V(G)$, where an $s$-set $S$ belongs to $P$ if and only if the induced graph $G[S]$ has minimum degree at least $(1/2 + \eps/2)s$.
	By \cref{lem:inheritance}, we have that $\delta_1(P) \geq (1 - e^{-\sqrt{s}}) \binom{n-1}{s-1} \geq (1 - 1/s^2)\binom{n-1}{s-1}$.

	By \cref{lem:almost-perfect-regular-tuple-tiling}, the $s$-graph $P$ contains a $\cQ(s,\alpha n,\rho,\eta)$-tiling that covers all but $\eta n$ vertices.
	Consider one of the $(\rho,\eta)$-lower-regular $s$-tuples with $\alpha n$-balanced parts $\cV=\{V_1,\dots,V_s\}$.
	Note that each $\cV$-partite edge of~$P$ corresponds to some labelled $s$-vertex graph with minimum degree at least $(1/2 + \eps/2)s$.
	Note that there are at most $2^{s^2}$ different labelled $s$-vertex graphs.
	By the pigeonhole principle and {$(\rho,\eta)$-lower-regularity},
	there is an $s$-vertex graph $R_1$ with $\delta(R_1) \geq (1/2 + \eps/2)s$ such that at least $(\eta-\rho)2^{-s^2} (\alpha n)^s$ edges of $P$ correspond to a labelled copy of $R_1$.
	By \cref{lem:blowup-niki}, we may find an {$m$-balanced blow-up $R_1(\cV_1) \subset G$} with $\cV$-partite edges, which constitutes one of the desired blow-ups.
	We repeat this procedure as long as there are at least $(\eta - \rho)(\rho \alpha n)^s$ $\cV$-partite unused edges, thus obtaining pairwise vertex-disjoint blow-ups $R_1(m),\dots,R_{\ell}(m)$.
	Since the tuple $(V_1,\dots,V_s)$ is $(\rho,\eta)$-lower-regular,
	this procedure stops when all but at most $\rho \alpha n$ vertices of $\cV$ are used in each cluster of $\cV$.
	After iterating this over all regular tuples, we have thus covered all but $\rho n + \eta n \leq 2\eta n$ vertices, which allows us to conclude with~$2 \eta$ in place of $\eta$.
\end{proof}

\section{Connecting and rooting blow-ups} \label{sec:blowups}

In this section, we show \cref{lem:connecting-blow-ups,lem:rooted-blow-ups}, starting with the former.
{The approach follows the (classic) solution by Kővári, Sós and Turán~\cite{KST54} to Zarankiewicz's problem, which is based on a double-counting argument.}

\begin{proof}[Proof of \cref{lem:connecting-blow-ups}]
	Note that every vertex in $U \cup V$ has at least $(1/2 + \eps/2)n$ neighbours in $W$.
	Let {$W_U \subseteq W$} be the set of vertices of $W$ with at least $\eps m/8$ neighbours in $U$.
	Counting edges between $U$ and $W$, we get $(1/2 + \eps/2) m n \leq |W_U|m + (n-|W_U|) \eps m/8$, which gives
	\[ |W_U| \geq \frac{1 + \eps - \eps/4}{2 - \eps/4} n \geq \left( \frac{1}{2} + \frac{\eps}{2}\right)n. \]
	Now let $W_V \subseteq W$ be the set of vertices of $W$ with at least $\eps m/8$ neighbours in $V$.
	An analogous argument gives that $|W_V| \geq \left( 1/2 + \eps/2 \right)n$.
	Hence, the set $W^\ast = W_U \cap W_V$ has size at least $ \eps n$.

	Set $t = m'$.
	We claim that there are $t$-sets $W_1 \subset U$, $W_2 \subset V$ and $W_3 \subset W$ such that $G$ contains the complete bipartite graphs $K(W_1,W_3)$ and $K(W_2,W_3)$.
	Suppose otherwise.
	Then no two $t$-sets in $U$ and $V$ have more than $t-1$ common neighbours in~$W^\ast$.
	Let $\deg(w,U)$ denote the number of edges between $w$ and $U$.
	Therefore
	\begin{align*}
		\sum_{w \in W^\ast} \binom{\deg(w,U)}{t} \binom{\deg(w,V)}{t} \leq (t-1) \binom{m}{t}^2.
	\end{align*}
	On the other hand, we have
	\begin{align*}
		\sum_{w \in W^\ast} \binom{\deg(w,U)}{t} \binom{\deg(w,V)}{t} \geq  \eps n \binom{\eps m/8}{t}^2  \geq \eps n (\eps/8)^{2t} \binom{m}{t}^2,
	\end{align*}
	where we used $t \leq \eps m / 2$ in the last inequality.
	Combining both bounds, we get $ \eps n (\eps/8)^{2t} \leq t$.
	Hence, \[ \log(\eps) + \log n - \log (8/\eps) 2t \leq \log t,\]
	which contradicts the choice of $t$.
\end{proof}

It remains to show how to obtain rooted blow-ups.
{The proof combines the ideas of \cref{lem:almost-blow-up-cover} and the solutions to Zarankiewicz's problem.}

\begin{proof}[Proof of \cref{lem:rooted-blow-ups}]
	Let $G'$ be the graph obtained from $G$ by removing the edges inside~$X$.
	Since $|X| = m_1 \leq \eps n / 4$ this barely affects the minimum degree.
	So we still have $\delta(G') \geq (1/2 + 3\eps/4)n$.
	Let $P$ be the $s$-uniform hypergraph where an $s$-set forms an edge if it has exactly one vertex in $X$ and the induced subgraph $G'[S]$ has minimum degree at least $(1/2 + \eps/2)s$.
	By \cref{lem:inheritance} and crude bounding, we have $e(P) \geq |X| n^{s-1} / 2$.
	Consider a random partition of $V(G) \sm X$ by assigning each vertex uniformly to one of $s-1$ parts.
	Together with $X$, this gives a partition of $V(G)$ into $s$ parts.
	Let $P' \subset P$ be the $s$-partite $s$-graph, which contains the crossing edges.
	By a first moment argument, it follows that with positive probability $e(P') \geq |X| n^{s-1} k!/ (2k^k)$.
	Fix such a partition for the rest of the proof.
	Recall that there are at most $2^{s^2}$ labelled graphs on $s$ vertices.
	Let $R$ be the labelled graph that appears on the edges of $P'$ most frequently, where the labels correspond to the parts.
	We pass to a subgraph $P'' \subseteq P'$ keeping only the $s$-sets $S$ where $G[S]$ is isomorphic to $R$.
	So $P''$ has at least $|X| n^{s-1} k!/ (k^k2^{s^2+2}) $ edges.
	Note that by our choice we can assume that for each $S \in P''$, $G[S]$ is isomorphic to $R$, $|S \cap X| = 1$ and also $G[S \setminus X]$ is isomorphic to the same $(s-1)$-graph $R'$, no matter the choice of $S$.

	Now let $1/s \gg \gamma_1 \gg \gamma_2 \gg c$, so that $e(P'') \geq \gamma_1 |X|n^{s-1}$.
	Let $\mathcal{R}'$ be the set of ordered $(s-1)$-tuples in $G - X$, inducing a copy of $R'$, which can be extended to at least $\gamma_1 |X|/2$ copies of $R$ by adding a vertex of $X$.
	Since $e(P'') \geq \gamma_1 |X|n^{s-1}$, we have that $|\mathcal{R}'| \geq \gamma_1 n^{s-1}/2$.
	By \cref{lem:blowup-niki}, there is a blow-up $K \subset G-X$ of $R'$ with all clusters of size $t = \gamma_2 \log n$.
	Let $R'_1, \dotsc, R'_t$ be vertex-disjoint copies of $R'$ in the blow-up $K$.
	Form an auxiliary bipartite graph $B$ with vertex classes $X$ and $W = \{ R'_1, \dotsc, R'_t \}$ where $u \sim R'_i$ if $R'_i$ can be extended to a copy of $R$ by adding $u$.
	Then $B$ has at least $\gamma_1 t|X|/2$ edges.

	To conclude, it is enough to find a complete bipartite graph in $B$ with parts of size $c \ln n$.
	The existence of such a graph follows from the usual bounds for Zarankiewicz's problem.
	To be precise, let $p = c \log n$ and suppose $B$ does not have a complete bipartite graph with two classes of size $p$.
	Then each $p$-set $Y\subset W$ has at most~$p-1$ common neighbours $N_B(Y)$ in $X$.
	Thus, double-counting and convexity reveal
	\begin{align*}
		(p-1) \binom{t}{p} & \geq \sum_{Y \in \binom{W}{p}} |N_B(Y)| = \sum_{v \in X} \binom{\deg_B(v)}{p} \\&\geq |X| \binom{e(B) /|X|}{p} \geq |X| \binom{\gamma_1 t/2}{p} \geq (\gamma_1/4)^p |X| \binom{ t}{p},
	\end{align*}
	where we used that $ p = c \log n \leq \gamma_2 \log n = \gamma_1 t / 4$ in the last inequality.
	Hence
	\[ p \geq (\gamma_1/4)^p |X|
		\geq |X| n^{-1/4} \geq n^{1/4},
	\]
	where the second inequality follows from the (small enough) choice of $c$,
	and the last inequality follows from $|X|=m_1 \geq n^{1/2}$.
	This contradicts the choice of $p$, and proves the result.
\end{proof}

\section*{Acknowledgements}
{We are grateful for the insightful comments of an anonymous referee, which helped to improve the presentation and correctness of the arguments.}

Richard Lang was supported by the Ramón y Cajal programme (RYC2022-038372-I) and by grant PID2023-147202NB-I00 funded
by MICIU/AEI/10.13039/501100011033.
Nicolás Sanhueza-Matamala was supported by ANID-FONDECYT Iniciación Nº11220269 grant and ANID-FONDECYT Regular Nº1251121 grant.

\bibliographystyle{plain}
\bibliography{../bibliography.bib}

\appendix

\section{Inheritance}\label{sec:inheritance}

For a $k$-graph $G$ on $n$ vertices, we denote the \emph{relative degree} of a  vertex $v \in V(G)$ by $\rdeg_G(v) = \deg_G(v) / (n-1)$.
The following instance of the inheritance principle immediately implies \cref{lem:inheritance}.

\begin{lemma}[Degree inheritance] \label{lem:inheritance-degree}
	For every $\eps > 0$, there is $C >0$ such that the following holds for all $1 \leq s \leq n$.
	Suppose $G$ is an $n$-vertex graph.
	Let $P$ be the $s$-graph on $V(G)$ with an edge $S$ whenever every vertex $v  \in S$ satisfies $\rdeg_{G[S]}(v) \geq \rdeg_G(v) - \eps$.
	Then $\delta_{1}(P) \geq \left(1- \exp({-Cs}) \right)  \tbinom{n-1}{s-1}$.
\end{lemma}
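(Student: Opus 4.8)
The plan is to fix a vertex $v\in V(G)$ and estimate the probability that a uniformly random $s$-set $S$ containing $v$ fails to be an edge of $P$; we want this to be at most $\exp(-Cs)$. First I would reduce to a single-vertex concentration estimate. Condition on $v\in S$, so $S=\{v\}\cup T$ where $T$ is a uniformly random $(s-1)$-subset of $V(G)\setminus\{v\}$. The set $S$ fails to be an edge exactly when some $u\in S$ has $\rdeg_{G[S]}(u) < \rdeg_G(u)-\eps$. By a union bound over the (at most $s$) choices of $u\in S$, it suffices to show that for each \emph{fixed} $u\in V(G)$,
\[
	\Prob\bigl[u\in S,\ \rdeg_{G[S]}(u) < \rdeg_G(u)-\eps\bigr] \le \exp(-C's)
\]
for a suitable $C'=C'(\eps)$, after which one absorbs the factor $s$ into the exponent by shrinking the constant. (One also uses that $\binom{n-1}{s-1}$ is exactly the number of $s$-sets through $v$, so probabilities and normalised counts agree.)

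Next I would carry out the concentration step. Fix $u$; write $N=N_G(u)$ and $d=|N|$, so $\rdeg_G(u)=d/(n-1)$. Given $u\in S$, the other $s-1$ vertices of $S$ form a uniform random $(s-1)$-subset $T$ of the $n-1$ vertices other than $u$, and $\deg_{G[S]}(u)=|T\cap N|$ is hypergeometrically distributed with mean $(s-1)d/(n-1)$. The event $\rdeg_{G[S]}(u) < \rdeg_G(u)-\eps$ says $|T\cap N| < (s-1)\bigl(\tfrac{d}{n-1}-\eps\bigr)$, i.e.\ the hypergeometric variable deviates below its mean by at least $\eps(s-1)$. By the standard Chernoff--Hoeffding bound for the hypergeometric distribution (sampling without replacement only improves concentration over the binomial), this probability is at most $\exp(-2\eps^2(s-1)^2/(s-1)) = \exp(-2\eps^2(s-1))$. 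Taking $C=\eps^2$ (say) and absorbing the union-bound factor $s$ via $s\,e^{-2\eps^2(s-1)} \le e^{-\eps^2 s}$ for $s$ large — and enlarging $C$ harmlessly to handle small $s$, where the bound $\delta_1(P)\ge 0$ is anyway trivially enough once $\exp(-Cs)\ge 1$ — gives the claimed bound $\delta_1(P)\ge (1-\exp(-Cs))\binom{n-1}{s-1}$.

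Finally, I would note how this yields \cref{lem:inheritance}: there the edge condition is that $G[S]$ has minimum degree at least $(1/2+\eps/2)s$, and since every $v$ has $\rdeg_G(v)\ge 1/2+\eps$, it suffices to have $\rdeg_{G[S]}(v)\ge \rdeg_G(v)-\eps/2$ for all $v\in S$ (using $\rdeg_{G[S]}(v)=\deg_{G[S]}(v)/(s-1)$ and absorbing the $(s-1)$-versus-$s$ discrepancy into the $\eps$-slack for $s$ large); this is exactly the event controlled above with $\eps/2$ in place of $\eps$, so $\delta_1(P)\ge(1-e^{-Cs})\binom{n-1}{s-1}$ and one checks $e^{-Cs}\le e^{-\sqrt s}$ for $s$ large in the relevant hierarchy. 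The only real technical point — and hence the main obstacle — is invoking the right tail bound for the hypergeometric distribution with the correct constant in the exponent and being careful that the deviation $\eps(s-1)$ is linear in $s$, so that squaring and dividing by the number of trials still leaves something linear in $s$; everything else is bookkeeping with binomial coefficients and the constant hierarchy.
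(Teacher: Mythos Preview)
Your proposal is correct and follows essentially the same route as the paper: fix the vertex whose $P$-degree is being computed, take a union bound over the potential ``bad'' vertex $u$ in the random $s$-set, and control each term via a Hoeffding-type tail bound for the hypergeometric count $|T\cap N_G(u)|$. One small bookkeeping point to tighten: once you have conditioned on $v\in S$, conditioning further on $u\in S$ leaves only $s-2$ uniformly random vertices (from $n-2$), not $s-1$; and to get the factor $s$ (rather than $n$) in the union bound you should use $\Prob[u\in S\text{ and bad}\mid v\in S]=\Prob[u\in S\mid v\in S]\cdot\Prob[\text{bad}\mid u,v\in S]$ and sum $\Prob[u\in S\mid v\in S]$ over $u$ --- this is exactly what the paper does, and your concentration step already delivers the required conditional bound.
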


We show \cref{lem:inheritance-degree} using the following standard tail bound.
Recall that a \emph{hypergeometrically} distributed random variable $X\sim \text{H}(N,K,n)$ describes the probability of $k$ successes in $n$ draws, without replacement, from a finite population of size $N$ that contains exactly $K$ objects with that feature, where each draw is either a success or a failure.
Moreover, the expected value of $X$ is $\Exp(X) = n N/K$.

\begin{lemma}[Hoeffding's inequality]\label{lem:concentration}
	For $X\sim \text{H}(N,K,n)$ and any $0 < \ell < nK/N$, we have ${\rm Pr}( |X - \Exp(X)| \leq \ell )  \leq 2 \exp\left(-2\ell^2/n\right)$.
\end{lemma}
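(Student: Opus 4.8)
We prove the standard two-sided tail estimate $\Prob\bigl(|X-\Exp(X)|\ge\ell\bigr)\le 2\exp(-2\ell^2/n)$; this is the intended content of the displayed inequality (the ``$\le\ell$'' in the event is a slip for ``$\ge\ell$''). The plan is to reduce the hypergeometric case to the binomial case via Hoeffding's convex-order comparison, and then run the textbook Chernoff argument. Write $X=\xi_1+\dots+\xi_n$, where $\xi_i\in\{0,1\}$ records whether the $i$-th of the $n$ draws without replacement is a success; each $\xi_i$ is Bernoulli with parameter $p:=K/N$, so $\Exp(X)=np$. The crucial input is that sampling without replacement is no less concentrated than sampling with replacement: $\Exp[\phi(X)]\le\Exp[\phi(Y)]$ for every convex $\phi\colon\REALS\to\REALS$, where $Y\sim\mathrm{Bin}(n,p)$.

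Granting this, I would argue as follows. Fix $t>0$ and apply the comparison to $\phi(x)=e^{tx}$, so that $\Exp[e^{t(X-np)}]\le\Exp[e^{t(Y-np)}]=\bigl(\Exp[e^{t(\eta-p)}]\bigr)^{n}$, using independence of the with-replacement draws, where $\eta\sim\mathrm{Bernoulli}(p)$. Since $\eta-p$ is mean zero and supported in an interval of length $1$, Hoeffding's lemma gives $\Exp[e^{t(\eta-p)}]\le e^{t^2/8}$, hence $\Exp[e^{t(X-np)}]\le e^{nt^2/8}$. By Markov's inequality $\Prob(X-np\ge\ell)\le e^{nt^2/8-t\ell}$ for every $t>0$, and the choice $t=4\ell/n$ optimises this to $\Prob(X-np\ge\ell)\le e^{-2\ell^2/n}$. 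Repeating the argument with the $N-K$ failures in place of the successes (equivalently, replacing $X$ by $n-X$) bounds the lower tail $\Prob(X-np\le-\ell)$ by the same quantity, and summing the two tails yields $\Prob(|X-np|\ge\ell)\le 2e^{-2\ell^2/n}$. The hypothesis $0<\ell<nK/N$ plays no role and may be dropped.

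The single non-routine ingredient is the comparison inequality $\Exp[\phi(X)]\le\Exp[\phi(Y)]$, and I expect this to be the main point requiring care --- it is the natural content of the ``details'' promised for the appendix. I would either cite it (Hoeffding, 1963), or give a self-contained proof by a Doob martingale over a uniformly random ordering $\pi$ of the $N$ population elements, with $X$ the number of successes among $\pi(1),\dots,\pi(n)$: a single transposition changes $X$ by at most $1$, and the bounded-differences method for random permutations yields a bound of the same shape, with a short conditional-range refinement needed to recover the optimal constant $2$ in the exponent. Everything downstream of the reduction is the standard moment-generating-function computation and presents no further difficulty.
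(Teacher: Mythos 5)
Your proof is correct, and it is the canonical argument: Hoeffding's convex-order domination of the hypergeometric by the binomial, followed by the standard moment-generating-function computation with Hoeffding's lemma and the optimal choice $t=4\ell/n$. The paper itself supplies no proof at all --- the lemma is introduced only as a ``standard tail bound'' and used as a black box --- so there is nothing to compare against; your write-up in fact does more than the paper asks. You are also right that the statement contains a slip (the event should read $|X-\Exp(X)|\ge\ell$, and incidentally the paper's formula $\Exp(X)=nN/K$ should be $nK/N$), and that the hypothesis $0<\ell<nK/N$ is not needed for the two-sided bound.
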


\begin{proof}[Proof of \cref{lem:inheritance-degree}]
	Set $V = V(G)$, and fix an arbitrary vertex $x \in V$.
	Let $S \subset V$ be drawn uniformly among all $s$-sets.
	We say that a vertex {$v \in V$ is \emph{bad for $S$}} if $\rdeg_{G[S \cup \{v\}]} (v) < \rdeg_G(v) - \eps$.
	Moreover, $S$ is \emph{good} if it contains no vertex which is bad for $S$.
	So $P$ is the $s$-graph of good edges.
	It suffices to show that, conditioning on the event that $x \in S$, every vertex $v \in V$ is bad for $S$ with probability at most $ \exp(-\Omega(s))$.
	Indeed, in this case,
	\begin{align*}
		\Pr(\text{$S$ is good} \mid x \in S) & \leq \sum_{v \in V} \Pr(\text{$v$ is bad for $S$} \mid \{x,v\} \subset S)       \\
		                                     & \leq  \sum_{v \in V} \Pr(\text{$v$ is bad for $S$} \mid x \in S) \Pr( v  \in S) \\
		                                     & = \exp({-\Omega(s)}) \sum_{v \in V} \Pr( v  \in S)
		= s \exp({-\Omega(s)}) = \exp({-\Omega(s)}).
	\end{align*}

	To obtain the above bound, consider a vertex $v \in V$ with $\delta = \rdeg_G(v) - \eps$.
	We can assume that $\delta \geq 0$ as otherwise there is nothing to show.
	Let $q$ be $1$ if $xv \in E(G)$ and $0$ otherwise.
	Let $X = \deg_{G[S \cup \{v\}]}(v)$ conditioning on $x \in S$, and note that $X \sim q+\text{H}(n-1,\deg_G(v)-q,s-1)$.
	So $\Exp (X) \geq (s-1) (\deg_G(v)-q)/(n-1) \geq  (\delta + (3/4) \eps) (s-1)$.
	It follows by \cref{lem:concentration} applied with $s-1$, $(\eps/4) (s-1)$ playing the rôles of $n$, $\ell$ that
	\begin{align*}
		{\rm Pr}\left (   X <   \delta (s-1) \right) & \leq
		{\rm Pr}\left (   \Exp (X) - X \geq  (\eps/4) (s-1) \right)                                                                              \\
		                                             & \leq 2 \exp\left( -\frac{2(\eps/4)^2}{s} (s-1)^2 \right) \leq  \exp(-\Omega(s)) .\qedhere
	\end{align*}
\end{proof}

\section{Almost perfect tilings} \label{sec:tiling}

In this section, we show \cref{lem:almost-perfect-regular-tuple-tiling} following the exposition of our work on hypergraphs~\cite[\S 7]{LS24a}.

\subsection*{Regular tuples}

At the heart of the proof is the fact that every (reasonably) large dense hypergraph admits a regular tuple of linear order.

\begin{lemma}[Regular tuple]\label{lem:density-regular-tuple}
	Let $1/s, \, d,\, \rho, \nu \gg 1/m$ and $\alpha = \exp(-\rho^{-2s})$.
	Let $P$ be an $m$-balanced $s$-partite $s$-graph with $e(P) \geq d m^s$.
	Then $P$ contains an $(\rho,d)$-lower-regular $m_1$-balanced $s$-tuple, where $\alpha m \leq m_1$.
\end{lemma}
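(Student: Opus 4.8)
The plan is to prove \cref{lem:density-regular-tuple} by an iterated density-increment argument. Starting from $P$ itself, I maintain a balanced $s$-partite sub-box whose edge density never decreases and strictly increases each time the box fails to be lower-regular; since densities lie in $[0,1]$ the process halts after boundedly many steps at a lower-regular box, and the only real work is to track how small its parts have become.

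In detail: put $d_0=e(P)/m^s\ge d$ and $(V_1^{(0)},\dots,V_s^{(0)})=(V_1,\dots,V_s)$. Given an $m_i$-balanced sub-box with density $d_i:=d_P(V_1^{(i)},\dots,V_s^{(i)})\ge d$, stop if it is $(\rho,d_i)$-lower-regular; otherwise fix a witness $(X_1,\dots,X_s)$ with $|X_j|\ge\rho m_i$ and $d_P(X_1,\dots,X_s)<d_i-\rho$. First \emph{clean up} the witness: whenever $|V_j^{(i)}\setminus X_j|$ is below a threshold of order $\rho m_i/s$, shrink $X_j$ down to that threshold complement size; since such an $X_j$ filled almost all of $V_j^{(i)}$, each such operation changes the box density by at most $O(\rho/s)$, so after cleaning all coordinates we still have a sub-box with every part of size $\ge\rho m_i$, every complementary part of size $\Omega(\rho m_i/s)$, and density $<d_i-\rho/2$. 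Now expand $e_P(V_1^{(i)},\dots,V_s^{(i)})$ over the $2^s$ products of $\{X_j,V_j^{(i)}\setminus X_j\}$: this writes $d_i$ as a convex combination of the $2^s$ corner densities in which the all-$X$ corner has weight $\ge\rho^s$ and density $<d_i-\rho/2$, so some other corner has density at least $d_i+\beta$ with $\beta:=\rho^{s+1}/2$, while all of its parts have size $\Omega(\rho m_i/s)$. Finally let $m_{i+1}$ be of order $\rho m_i/s$ and let $(V_1^{(i+1)},\dots,V_s^{(i+1)})$ be an $m_{i+1}$-balanced sub-box of that corner \emph{of maximum $P$-density}; a one-line first moment computation (a uniformly random $m_{i+1}$-balanced sub-box of any fixed box has expected $P$-density equal to that box's density) gives $d_{i+1}\ge d_i+\beta\ge d$, restoring the loop invariant.

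Because $d_{i+1}-d_i\ge\beta=\rho^{s+1}/2$ throughout, the loop runs for at most $t=O(\rho^{-(s+1)})$ steps and stops at an $m_t$-balanced $(\rho,d_t)$-lower-regular $s$-tuple with $d_t\ge d$; since each step shrinks the part sizes by a factor of order $\rho/s$ only, $m_t\ge(\rho/s)^{O(\rho^{-(s+1)})}m$, which a direct estimate shows is at least $\exp(-\rho^{-2s})m=\alpha m$ once $1/m$ is small enough — so this tuple is exactly what is required (reading, as in the definition of $\cQ(s,m,\rho,d)$, the phrase ``$(\rho,d)$-lower-regular'' as ``$(\rho,d')$-lower-regular for its own density $d'\ge d$'').

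The hard part is the bookkeeping in the increment step: one must guarantee that after cleanup \emph{both} every part and every complementary part of the witness box is comparable to $m_i$, so that the low-density corner carries non-negligible weight and the corner we move to is not too small; and then check that the per-step shrink factor $\Theta(\rho/s)$, compounded over the $\Theta(\rho^{-(s+1)})$ steps, still beats $\exp(-\rho^{-2s})$ — this last inequality is the one place where the precise form of the stated bound has to be verified rather than guessed.
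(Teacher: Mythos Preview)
Your proposal is correct and follows the same density-increment strategy as the paper's proof. The paper's execution is slightly cleaner: rather than splitting into the $2^s$ corners $\prod_j\{X_j,V_j^{(i)}\setminus X_j\}$ (which forces your cleanup step and a first-moment rebalancing afterwards), it first shrinks the witness to an exact $(\rho m_i)$-set $Y_j\subset X_j$ in each coordinate, partitions every $V_j^{(i)}$ into $t=1/\rho$ cells of that common size (one of which is $Y_j$), and then averages over the resulting $t^s$ equally-weighted balanced cells to find one of density at least $d_i+\rho^{s+1}/2$ directly --- same increment, same shrink factor, same final bound, but no bookkeeping for small complements or unbalanced corners.
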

\begin{proof}
	By assumption, the vertex set of $P$ consists of an $m$-balanced tuple $(V_1,\dots,V_s)$ of density $d_P(V_1,\dots,V_s)\geq d$.
		{Let $m' =\rho m$.}
	Suppose that $(V_1,\dots,V_s)$ is not $(\rho,d)$-lower-regular.
	Then there exist $X_1\subset V_1,$ $\dotsc,$ $X_s \subset V_s$ with $|X_1|,\dotsc,|X_s| \geq m'$ and $d_P(X_1,\dotsc,X_s) < d - \rho$.
	By averaging, we can find $Y_1\subset X_1,$ $\dotsc,$ $Y_s \subset X_s$ with $|Y_1|,\dotsc,|Y_s| = m'$ and $d_P(Y_1,\dots,Y_s) < d - \rho$.
	Partition each $V_i \setminus Y_i$ into $m'$-sized sets; by adding $Y_i$, we obtain a partition $\mathcal{P}_i$ of $V_i$ into exactly $m/m' = {(\rho)}^{-1} =: t$ sets.
	There are $t^s$ many choices for a tuple $(Z_1, \dotsc, Z_s)$ of sets where $Z_i \in \mathcal{P}_i$.
	By averaging, we can find a tuple $(Z_1,\dots,Z_s)$ of parts such that $d_P(Z_1,\dots,Z_s) \geq d + \gamma$ for $\gamma = (\rho/2) (t)^{-s} = (\rho/2) {(\rho)}^{s}$.
	If $(Z_1,\dots,Z_s)$ is $(\rho,d)$-lower-regular, we are done.
	Otherwise, we iterate this process within $P[Z_1,\dots,Z_2]$.
	In each step, the density increases by $\gamma$ and the part size decreases by a factor of at most $\rho/2$.
	The process stops after at most $1/\gamma$ steps with part sizes of at least $(\rho/2)^{1/\gamma} m \geq \exp(-\rho^{-2s}) m$, as required.
\end{proof}

\subsection*{Hypergraph matchings}\label{sec:hypmatching}

We also require the following (crude) Dirac-type bound for hypergraph matchings due to Daykin and Häggkvist~\cite{DH81}, whose  short proof is included for the sake of completeness.

\begin{lemma}\label{lem:matching}
	Let $1/s, \eps \gg 1/n$ with $n$ divisible by $s$.
	Let $P$ be an $n$-vertex $s$-graph with $\delta_1(P) \geq (1 - 1/s + \eps) \tbinom{n-1}{s-1}$.
	Then $P$ has a perfect matching.
\end{lemma}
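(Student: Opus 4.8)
The plan is to prove \cref{lem:matching} by a greedy absorption-free argument: build a matching one edge at a time, and show that if a matching $M$ with $|M| < n/s$ has been constructed, then the minimum degree condition forces an edge entirely inside the set $U$ of currently uncovered vertices. Since $n$ is divisible by $s$, a matching that cannot be extended and saturates all but fewer than $s$ vertices must in fact be perfect, so it suffices to show extendability whenever $|U| \geq s$.

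Concretely, suppose towards a contradiction that $M = \{e_1, \dots, e_\ell\}$ is a maximal matching with $\ell < n/s$, and let $U = V(P) \setminus \bigcup_i e_i$, so $|U| = n - s\ell \geq s$. Fix any vertex $u \in U$. The key counting step is to bound how many $s$-sets through $u$ can fail to lie in $P$: since $\delta_1(P) \geq (1 - 1/s + \eps)\binom{n-1}{s-1}$, at most $(1/s - \eps)\binom{n-1}{s-1}$ of the $\binom{n-1}{s-1}$ sets $\{u\} \cup S$ with $S \in \binom{V \setminus \{u\}}{s-1}$ are non-edges. Now I would count the $s$-sets through $u$ that are \emph{blocked} by $M$, meaning they meet $\bigcup_i e_i$; a cleaner route is to observe that a set $\{u\} \cup S$ fails to be a candidate for extending $M$ inside $U$ only if $S \not\subseteq U$. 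The number of $(s-1)$-sets $S \subseteq V \setminus \{u\}$ with $S \not\subseteq U$ is at most $\binom{n-1}{s-1} - \binom{|U|-1}{s-1}$, and one checks that $\binom{|U|-1}{s-1} \geq \binom{s-1}{s-1} = 1$; more usefully, using $|U| = n - s\ell$ and $\ell \le n/s - 1$ one shows $\binom{n-1}{s-1} - \binom{|U|-1}{s-1} < (1/s - \eps)^{-1}$-free — rather, the point is that the fraction of $(s-1)$-sets avoiding $\bigcup e_i$ is at least $\prod_{i=0}^{s-2}\frac{|U|-1-i}{n-1-i}$, and since each $e_j$ removes only $s$ vertices while $U$ is still large, this fraction is bounded below by roughly $(|U|/n)^{s-1}$. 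Comparing with the $1/s - \eps$ deficiency and using that $|U| \ge s$ forces, after a short computation, the existence of an $(s-1)$-set $S \subseteq U \setminus \{u\}$ with $\{u\} \cup S \in E(P)$, contradicting maximality.

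The cleanest way to organise the arithmetic is the original Daykin--Häggkvist trick: pick $u \in U$ and \emph{any} $s-1$ further vertices one at a time, at each stage $i$ (for $i = 1, \dots, s-1$) choosing a new vertex avoiding the already chosen ones and avoiding a ``forbidden'' set of size less than $(1/s - \eps)/(1/s) \cdot$(something), so that the growing set stays extendable to an edge; the degree condition guarantees that at each of the $s-1$ steps the number of allowed vertices is positive because $(1 - 1/s + \eps)\binom{n-1}{s-1}$ is large enough that even after union-bounding over the $s-1$ partial-extension constraints we have not exhausted $U$. I would spell this out as: at step $i$ we have chosen $T_{i} \subseteq U$ with $|T_i| = i$ and $T_i$ extendable (i.e.\ contained in some edge of $P$ lying in $U$); the vertices $w \in U \setminus T_i$ for which $T_i \cup \{w\}$ is \emph{not} extendable form a set whose size is controlled by the codegree deficiency, which telescopes against the bound $\delta_1(P) \ge (1-1/s+\eps)\binom{n-1}{s-1}$ to stay below $|U| - i$.

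The main obstacle I anticipate is making the codegree bookkeeping at each extension step genuinely follow from a \emph{vertex}-degree (i.e.\ $\delta_1$) hypothesis rather than a codegree hypothesis: one does not directly control how many $(s-1)$-sets extend a given small set $T_i$ to an edge, only how many contain a given single vertex. The resolution is to sum the degree bound over the $\le i \le s-1$ vertices of $T_i$ and over all $(s-1)$-supersets, i.e.\ to count pairs (edge $e \supseteq T_i$, ...) via $\sum_{v \in T_i}\deg_P(v)$ and an inclusion–exclusion, and to absorb all resulting lower-order terms into the slack $\eps\binom{n-1}{s-1}$; since $s$ is a constant and $\eps \gg 1/n$, every error term of the form $O_s(\binom{n-2}{s-2})$ is negligible compared to $\eps \binom{n-1}{s-1}$. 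Once that is set up, the contradiction with maximality of $M$ is immediate, and divisibility of $n$ by $s$ upgrades the maximal matching to a perfect one.
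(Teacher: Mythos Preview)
Your greedy approach has a genuine gap: it is not true that every non-perfect matching admits an edge lying entirely in the uncovered set $U$, even under minimum degree far exceeding $(1-1/s+\eps)\binom{n-1}{s-1}$. Take $P = K_n^{(s)}$ with a single edge $e_0$ deleted; then $\delta_1(P) = \binom{n-1}{s-1} - 1$, yet any partition of $V \setminus e_0$ into $s$-sets gives a maximal matching of size $n/s - 1$ whose uncovered set is exactly $U = e_0$, a non-edge. More quantitatively, the inequality implicit in your ``fraction'' estimate, namely $\binom{|U|-1}{s-1} > (1/s - \eps)\binom{n-1}{s-1}$, requires roughly $|U| > n/s^{1/(s-1)}$, so once $|U|$ drops below this the counting yields nothing. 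The later vertex-by-vertex extension inside $U$ does not escape this: no bookkeeping with $\delta_1$ bounds can manufacture an edge inside $U$ when none exists, and the ``main obstacle'' you flag (controlling extensions of a partial set from a $\delta_1$ hypothesis) is a symptom of this failure rather than a lower-order term to be absorbed into the $\eps$ slack.

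The paper's proof instead takes a \emph{largest} (not merely maximal) matching and augments it by rearranging existing matching edges. After passing to a random balanced $s$-partition, so that the $s$-partite subgraph $P'$ satisfies $\delta_1(P') \geq (1-1/s+\eps/2)m^{s-1}$, one may assume a largest matching $M$ in $P'$ misses exactly one vertex $v_i$ per part. For an ordered tuple $X = (e_1,\dots,e_{s-1}) \in M^{s-1}$ and each index $i$, call $X$ good for $i$ if $v_i$ together with one designated vertex from each $e_j$ forms an edge of $P'$; the degree of $v_i$ forces at least $(1-1/s+\eps/2)m^{s-1}$ tuples to be good for $i$, so by a union bound some $X$ with distinct entries is good for all $i$ simultaneously. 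One then chooses the designations so that the $s^2$ vertices of $\{v_1,\dots,v_s\} \cup e_1 \cup \dots \cup e_{s-1}$ repack into $s$ edges, contradicting maximality of $|M|$. The essential difference from your plan is that this augmentation dismantles and reassembles matching edges, which is exactly what the actual Daykin--H\"aggkvist argument does and what your purely greedy scheme forbids.
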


\begin{proof}
	Set $m=n/s$.
	To begin, we randomly partition the vertices of $P$ into $k$ parts of size~$m$.
	Let $P'$ be the $n$-vertex $s$-partite $s$-graph obtained by keeping only the partite edges.
	Using standard concentration bounds (such as McDiarmid's inequality), we see that $\delta_1(P') \geq (1-1/s+\eps/2) m^{s-1}$ with high probability.
	Fix such a partition.
	We shall prove that $P'$ contains a perfect matching.

	Consider a largest matching $M$ in $P'$.
	We can assume that $M$ covers all but $s$ vertices of $P'$.
	Otherwise, just add some dummy edges to $M$.
	Let $v_1,\dots,v_s$ be the vertices not covered by $M$, where $v_i$ is in cluster $V_i$ of $P'$.
	Let $\sigma_1,\dots,\sigma_s \in \cS_s$ be arbitrary permutations with $\sigma_i(i)=i$.
	Let $X = (e_1,\dots,e_{s-1}) \in M^{s-1}$.
	For $\sigma_i$ and $1\leq j \leq s$ with $j\neq i$, let $v_{\sigma_i}(j)$ be the vertex of $e_{\sigma(i)}$ in cluster $V_j$.
	Set $v_{\sigma_i}(i) = v_i$.
	We say that $X$ is \emph{good} for index $i$ if $\{v_{\sigma_i}(1),\dots,v_{\sigma_i}(s)\} \in E(P').$
	There are at most $m^{s-1}$ choices for $X$.
	Note that at most $\binom{s-1}{2}n^{s-2} < \eps/2$ of these choices contain a repeated matching edge.
	Moreover, for fixed $i$, there are at least $(1-1/s + \eps/2) m^{s-1}$ good $X \in M^{s-1}$.
	(One for each $M$-partite neighbour of $v_i$.)
	So there is some $X$ without repeated matching edge, which is good for all $1 \leq i \leq s$.
	Now we just have to choose $\pi_1,\dots,\pi_s$ such that we may find a perfect matching between the vertices of $X$ and $v_1,\dots,v_s$.
	But this is an easy exercise.
	So we have a new matching $M'$.
	If $M$ did not have dummy edges, we are done.
	Otherwise, we have found a larger matching than $M$, which is a contradiction.
\end{proof}

\subsection*{Incrementalism}

Now we are ready to show the main result of this section.
The next lemma will turn a $\cQ$-tiling into another $\cQ$-tiling with smaller parts which covers more vertices, so it suffices to iterate until most vertices are covered.
More precisely, we apply the lemma with $1/s \gg \mu \gg \alpha$ and {$\eta = \mu/16$}, so that $1 - 1/s^2 \geq 1 - 1/s + \mu$.
In each step, we turn a $\cQ$-tiling into another $\cQ$-tiling with smaller parts such that additional $(\nu\mu/16)n = \eta^2n$ vertices are covered.
So we arrive at \cref{lem:almost-perfect-regular-tuple-tiling} after at most $\eta^{-2}$ steps.

\begin{lemma}[Regular tuple tiling increment]\label{lem:larger-matching}
	Let $ 1/s,\, \mu,\, \eta, \, d,  \,\eps \gg \alpha \gg 1/n$ with $d \leq \mu/16$ and $\eps \leq d/2$.
	Set $\gamma = \exp(-\eps^{-2s})$ and $m=\alpha n$.
	Let $\cQ_1 = \cQ(s,m,\eps,{d})$ and $\cQ_2 = \cQ(s, m',\eps',d')$,
	where $m' = (\gamma \eta \mu^2/128)m$,
	$\eps'= 2\eps$,
	and $d' = d/2$.
	Let $P$ be an $s$-graph on $n$ vertices with $\delta_1(P) \geq \left(1 - 1/s + \mu \right) \binom{n-1}{s-1}$.
	Suppose that $P$ contains a $\cQ_1$-tiling $Q_1$ on $\lambda n$ vertices with $0 \leq \lambda \leq 1-\eta$.
	Then $P$ contains a $\cQ_2$-tiling on at least $(\lambda + \mu \eta /16) n$ vertices.
\end{lemma}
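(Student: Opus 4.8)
The plan is to take the uncovered vertex set $U = V(P) \setminus V(Q_1)$, which has size $(1-\lambda)n \geq \eta n$, and find a single new $(\eps',d')$-lower-regular balanced tuple with small parts that covers a $\mu\eta/16$-fraction of $V(P)$, most of it inside $U$, at the cost of removing a controlled number of vertices from the old tuples of $Q_1$; then re-partition the (slightly shrunk) old tuples into smaller balanced lower-regular pieces so that everything lies in $\cQ_2$. First I would pass to the induced subhypergraph $P[U]$ and observe, using $\delta_1(P) \geq (1-1/s+\mu)\binom{n-1}{s-1}$ and $|U| \geq \eta n$, that a typical vertex of $U$ has many edges inside $U$; a crude averaging then gives $e(P[U]) \geq d' |U|^s$ for a suitable density (comfortably above $d/2$ after absorbing the loss), so $P[U]$ is dense enough to apply \cref{lem:density-regular-tuple}. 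That yields an $(\eps, d)$-lower-regular $\ell$-balanced $s$-tuple $\cW = (W_1,\dots,W_s)$ inside $U$ with $\ell \geq \gamma |U| \geq \gamma\eta n$, which is already much larger than $m' = (\gamma\eta\mu^2/128)m = (\gamma\eta\mu^2/128)\alpha n$.

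The next step is the accounting. The tuple $\cW$ covers $s\ell$ vertices of $U$; I want to enlarge the covered set to a $(\lambda + \mu\eta/16)n$ total, so I may need to also recruit a few vertices from $V(Q_1)$ — but actually the simpler route is to just keep $\cW$ of size about $\mu\eta n /16$ (shrinking it if it came out larger, which preserves lower-regularity up to adjusting $\rho$, hence the slack $\eps' = 2\eps$), add it to the tiling, and then note that the old tuples of $Q_1$ are untouched. Since $\cW \subset U$ is disjoint from all old tuples, vertex-disjointness is automatic and no vertices need to be removed from $Q_1$ at all; the only real work left is that the new tuple has part size $\ell$, which may not match $\cQ_2$'s requirement $m' \leq$ part size — but $\ell \geq \gamma\eta n \geq m'$ since $\alpha \ll 1$, and we can simply sub-partition $\cW$ into balanced $m'$-blocks, each of which remains $(\eps',d')$-lower-regular because restricting a lower-regular tuple to large subsets only degrades the parameter by the regularity slack, and $\eps' = 2\eps$ was chosen to absorb exactly this. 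Likewise each old tuple of $Q_1$, originally in $\cQ_1 = \cQ(s,m,\eps,d)$, can be sub-partitioned into $\cQ(s,m',\eps',d')$ pieces by the same restriction argument, using $m' \leq m$ and $\eps' = 2\eps$, $d' = d/2$. Collecting all these pieces gives a $\cQ_2$-tiling covering $(\lambda + \mu\eta/16)n$ vertices.

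The main obstacle I expect is the density bookkeeping in the first step: verifying that $P[U]$ genuinely has density at least $d \leq \mu/16$ (so that \cref{lem:density-regular-tuple} applies with the right $d$) given only the minimum-degree bound $\delta_1(P) \geq (1-1/s+\mu)\binom{n-1}{s-1}$ and $|U| \geq \eta n$. The point is that each vertex $v \in U$ misses at most $(1/s - \mu)\binom{n-1}{s-1}$ edges of $P$, so it misses at most that many $s$-sets within $U$ too, whence $\deg_{P[U]}(v) \geq \binom{|U|-1}{s-1} - (1/s-\mu)\binom{n-1}{s-1}$; this is positive and in fact a decent fraction of $\binom{|U|-1}{s-1}$ provided $|U|$ is not too small, but when $|U|$ is close to $\eta n$ with $\eta$ tiny the bound $\binom{n-1}{s-1}$ dwarfs $\binom{|U|-1}{s-1}$ and the estimate degrades. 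The fix is to restrict attention to a suitable subset, or — more robustly — to apply the regularity machinery not to $P[U]$ directly but to observe that an $\eps n$ fraction worth of edges of $P$ lie with at least one endpoint in $U$ and route through an $s$-partite reduction as in the proof of \cref{lem:rooted-blow-ups}; I would choose whichever of these makes the constant chase cleanest, since the rest of the argument is routine restriction-of-regular-tuples bookkeeping.
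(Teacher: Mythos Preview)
Your approach has a genuine gap at exactly the point you flagged. The hierarchy $1/s,\mu,\eta,d,\eps \gg \alpha$ imposes no relation between $\eta$ and $1/s$, so $\eta$ may be far smaller than $(1/s-\mu)^{1/(s-1)}$. In that regime $\binom{|U|-1}{s-1}\leq \eta^{s-1}\binom{n-1}{s-1}\ll (1/s-\mu)\binom{n-1}{s-1}$, and the minimum-degree hypothesis on $P$ is perfectly compatible with $P[U]$ being \emph{empty}. So the first step --- applying \cref{lem:density-regular-tuple} inside $U$ --- cannot get off the ground. There is also a second, independent gap: even granting a density of $d$ inside $U$, a single invocation of \cref{lem:density-regular-tuple} only guarantees a tuple with part size $\ell\geq \gamma|U|/s$, so $s\ell$ may be as small as $\gamma\eta n$. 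Since $\gamma=\exp(-\eps^{-2s})$ is astronomically smaller than $\mu$, this is nowhere near the increment $(\mu\eta/16)n$ you need; you cannot ``keep $\cW$ of size about $\mu\eta n/16$'' because it need not be that large. Your fallback of routing one part through $U$ and the rest through $V(Q_1)$ inherits the same quantitative shortfall, and in addition removes vertices from the old tuples of $Q_1$ in an uncontrolled pattern, so the recycling-by-restriction step is no longer justified.

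The paper's argument avoids $P[U]$ entirely. It partitions essentially all of $V(P)$ --- both $V(Q_1)$ and the uncovered vertices --- into clusters of a common intermediate size $m_2=(\mu/8)m$, defines a reduced $s$-graph $R$ on these clusters (an $s$-set of clusters is an edge when the corresponding $s$-partite piece of $P$ is dense), checks that $R$ inherits a $\delta_1$-condition above the matching threshold, and takes a perfect matching $\cM$ in $R$ via \cref{lem:matching}. Inside each matching edge it iterates \cref{lem:density-regular-tuple} to extract fresh regular tuples covering a fixed $\Theta(\mu)$-fraction of every cluster. Because this is done uniformly over all clusters, the clusters lying in $U$ contribute $\Theta(\mu\eta n)$ newly covered vertices, while each old tuple of $Q_1$ loses a nearly equal number of vertices from each of its $s$ parts; its remainder is then still $(2\eps,d/2)$-lower-regular and lands in $\cQ_2$. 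The perfect matching in the reduced graph is the missing idea: it is what lets the fresh tuples reach into $U$ while keeping the damage to $Q_1$ balanced enough to recycle.
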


Before we come to the proof of \cref{lem:larger-matching}, let us derive \cref{lem:almost-perfect-regular-tuple-tiling}.
\begin{proof}[Proof of \cref{lem:almost-perfect-regular-tuple-tiling}]
	Set $\mu = 16 \eta$, and note that $1 - 1/s^2 \geq 1-1/s + \mu$.
	Set $t = (\mu \eta/16)^{-1} = \eta^{-2}$.
	For $0 \leq i \leq t$, we set $d_i = (\mu/16)/2^i$, $\eps_i = d_i/2$, $\gamma_i = \exp(-\eps_i^{-2s})$, $\alpha_0 = 1$, $\alpha_{i+1} = \gamma_{i+1} \eta \mu^2/128$, $m_i = \alpha_i n$, $\lambda_0 = 0$ and $\lambda_{i+1} = \lambda_{i} + \eta^2$.
	We begin with $\cQ_1 = \cQ(s,\alpha_0 n,\eps_0,d_0)$ and an empty $\cQ_1$-tiling $Q_1$.
	We then apply \cref{lem:larger-matching} iteratively until all but $\eta n$ vertices are covered.
	So at the beginning of step $i$, we have a $\cQ_1$-tiling $Q_1$ on $\lambda_i n$ vertices in $P$, where $\cQ_{1} = \cQ(s,m_i,\eps_i,d_i)$.
	And at the end of step $i$, we have a $\cQ_{2,i}$-tiling on at least $(\lambda_i + \mu \eta /16) n = (\lambda_i + \eta^2)n = \lambda_{i+1} n$ vertices, where $\cQ_{2,i} = \cQ(s, m_{i+1},\eps_{i+1},d_{i+1})$.
	Clearly, this process stops after at most $t$ steps.
	So we can finish with a {$\cQ(s,\alpha_t n,\eps_t,d_t)$-tiling} that covers all but $\eta n$ vertices.
\end{proof}

Let us briefly explain how we intend to prove \cref{lem:larger-matching}.
Our plan is to extract additional regular tuples from the $(1-\lambda)n \geq \eta n$ vertices not covered by $Q_1$.
We shall do this by using the assumption that the minimum degree is $\mu \binom{n-1}{s-1}$ above the threshold for a perfect matching.
To track these gains, we define $\nu = \mu/16$ and $\beta = \nu \eta /2$.
In a first step, we find a tiling $\cQ_{\text{fresh}}$ that covers $4\beta \eta n$ additional vertices outside of $Q_1$ and some vertices inside of $Q_1$.
In a second step, we `recycle' the remainder of $Q_1$ using another tiling $\cQ_{\text{rec}}$ such that $\cQ_{\text{fresh}} \cup \cQ_{\text{rec}}$ together cover all but $2\beta \eta n$ vertices in $Q_1$.
We note that the leftover comes from the fact that removing $\cQ_{\text{fresh}}$ leaves parts of $Q_1$ a bit unbalanced.
In any case, the relative difference $2\beta n = \mu \eta /16$ presents the share of additional covered vertices at the end of the argument.

Recall that the input part sizes are $m$ and $m'$.
Since the parts of $Q_1$ may be of different size, it will be convenient to use an intermediate part size.
We therefore define $m_1 = m$, $m_2 =  (\mu/8) m_1$ and $m_3 = m' = \gamma 2\beta  m_2$.
Our approach is then to refine the parts of $Q_1$, which have size at least $m_1$, into parts of uniform size $m_2$.
We remark that $m_2$ is chosen small enough to take advantage of the excess minimum degree $\mu \binom{n-1}{s-1}$.
Moreover, $m_3$ is chosen small enough to keep the number of lost vertices due to imbalancedness (discussed above) under $2\beta n$.
Now come the details.

\begin{proof}[Proof of \cref{lem:larger-matching}]
	We set $\nu = \mu/16$ and $\beta = \nu \eta /2$.
	Moreover, let $m_1 = m$, $m_2 =  (\mu/8) m_1$ and $m_3 = m' = \gamma 2\beta  m_2$.
	We begin by defining a family $\cU$ of disjoint sets in $V(P)$, as follows.
	For each $s$-tuple in $Q_1$ with parts $\{V_1, \dotsc, V_s\}$, pick a maximal number of vertex-disjoint sets of size $m_2$ inside each $V_i$, and add all of them to $\cU$.
	It follows that the sets of $\cU$ cover all but $m_2 = (\mu/8)m_1 \leq (\mu/8) |V_i|$ vertices in each $V_i$.
	Next, we extend $\cU$ by subdividing the vertices outside of $Q_1$ into a maximal number of vertex-disjoint $m_2$-sets such that $r = |\cU|$ is divisible by $s$.
	This implies that outside of $V(Q_1)$, the family $\cU$ covers all but at most $sm_2$ vertices.
	We remark that $1/s,\mu$ are much larger than $1/r$ by the choice of $\alpha$.
	Now, let $R$ be an $s$-graph with vertex set~$\cU$ and an edge $X$ if $P$ contains at least $4\nu m_2^{s}$ $X$-partite edges.
		{So $R$ plays the rôle of a reduced graph in the context of a Regularity Lemma.}
	It is therefore not surprising that $R$ inherits the minimum degree condition of~$P$.
	\begin{claim}\label{cla:reduced-degree}
		We have $\delta_1(R) \geq \left( 1 - 1/s + \mu/2 \right) \binom{r-1}{s-1}$.
	\end{claim}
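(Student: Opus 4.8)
The plan is to transfer the minimum degree condition from $P$ to the reduced graph $R$ by a standard counting argument, exploiting the fact that the sets in $\cU$ are large (of size $m_2$) but there are few of them ($r$ of them, with $1/r$ much smaller than $1/s,\mu$). Fix a set $U \in \cU$, say $U \subseteq V(P)$ with $|U| = m_2$. I want to show that $U$ lies in many edges of $R$, i.e.\ that for all but at most $(1/s - \mu/2)\binom{r-1}{s-1}$ of the $(s-1)$-subsets $\{U_2, \dotsc, U_s\}$ of $\cU \setminus \{U\}$, the tuple $(U, U_2, \dotsc, U_s)$ supports at least $4\nu m_2^s$ partite edges of $P$.

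First I would count, for a fixed vertex $v \in U$, the number of edges of $P$ through $v$ that are partite with respect to the partition $\cU$ (together with the at most $sm_2$ leftover vertices collected into a dummy part, which I discard). Since $\delta_1(P) \geq (1 - 1/s + \mu)\binom{n-1}{s-1}$ and a random $(s-1)$-subset of $V(P) \setminus U$ is $\cU$-partite with probability close to $(s-1)!\,(m_2/n)^{s-1} \cdot \binom{r-1}{s-1} / \binom{n-1}{s-1}$ up to a negligible multiplicative error (here is where $1/r \ll 1/s, \mu$ and the small number $sm_2$ of leftover vertices enter), summing over $v \in U$ shows that the number of $\cU$-partite edges of $P$ meeting $U$ is at least $(1 - 1/s + 3\mu/4)\,m_2^{s}\binom{r-1}{s-1}$, after absorbing the error terms. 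Call an $(s-1)$-set $\{U_2,\dotsc,U_s\} \subseteq \cU \setminus \{U\}$ \emph{rich} if the tuple $(U,U_2,\dotsc,U_s)$ carries at least $4\nu m_2^s$ partite edges, and \emph{poor} otherwise. Each poor $(s-1)$-set contributes fewer than $4\nu m_2^s$ edges, and each rich one at most $m_2^s$; writing $N$ for the number of poor $(s-1)$-sets, we get
\[
\left(1 - \tfrac1s + \tfrac{3\mu}{4}\right) m_2^{s} \binom{r-1}{s-1} \;\leq\; \left(\binom{r-1}{s-1} - N\right) m_2^{s} + N \cdot 4\nu m_2^{s}.
\]
Rearranging and using $\nu = \mu/16$, this yields $N(1 - 4\nu) \leq (1/s - 3\mu/4)\binom{r-1}{s-1}$, hence $N \leq (1/s - \mu/2)\binom{r-1}{s-1}$ for $s$ large; therefore $\deg_R(U) \geq \binom{r-1}{s-1} - N \geq (1 - 1/s + \mu/2)\binom{r-1}{s-1}$, as claimed. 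Since $U$ was arbitrary, the claim follows.

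The main obstacle is bookkeeping the error terms cleanly: one must check that discarding the at most $sm_2$ vertices lying outside all parts of $\cU$, and replacing the exact hypergeometric count of $\cU$-partite $(s-1)$-sets through a fixed vertex by its leading-order term $m_2^{s-1}\binom{r-1}{s-1}$, together cost at most an additive $\mu/4$ in the relative degree — this is exactly what the hierarchy $1/s,\mu \gg 1/r$ (guaranteed by the choice of $\alpha$, since $r \geq (1-o(1))n/m_2 = (1-o(1))(8/\mu)/\alpha$) is designed to buy. Everything else is a short double-counting argument of the same flavour as the proof of \cref{lem:matching}.
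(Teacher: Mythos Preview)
Your proposal is correct and follows essentially the same argument as the paper: fix a part $U\in\cU$, lower-bound the number of $\cU$-partite edges of $P$ meeting $U$ by $(1-1/s+3\mu/4)m_2^s\binom{r-1}{s-1}$ using the minimum-degree hypothesis and the hierarchy $1/s,\mu\gg 1/r$, then distribute these edges over the $(s-1)$-subsets of $\cU\setminus\{U\}$ (rich ones contributing at most $m_2^s$, poor ones at most $4\nu m_2^s$) to bound the number of non-edges of $R$ at $U$. The only cosmetic difference is that the paper upper-bounds the poor contribution by $4\nu m_2^s\binom{r-1}{s-1}$ directly rather than solving for $N$, which gives $\deg_R(U)\geq(1-1/s+3\mu/4-4\nu)\binom{r-1}{s-1}=(1-1/s+\mu/2)\binom{r-1}{s-1}$ in one line.
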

	\begin{proofclaim}
		Fix a vertex $x$ in $R$.
		There are at most $r^{s-2} m_2^{s} \leq (\mu/4) m_2^{s} \binom{r-1}{s-1}$ edges of $G$, which have one vertex in the part of $x$ and at least two vertices in some part.
		Put differently, there are at least $(1-1/s^2 + 3\mu/4) m_2^s  \binom{r-1}{s-1}$ edges in $G$, which are $\cU$-partite and have a vertex in $x$.
		Every edge of $R$ incident to $x$ can host at most $m_2^{s}$ of these edges.
		Every non-edge of $R$ incident to $x$ can host at most $4\nu m_2^s$ of these edges.
		Thus $m_2^{s} \deg_R(x) + 4\nu m_2^s \binom{r-1}{s-1}  \geq (1-1/s^2 + 3 \mu / 4) m_2^s  \binom{r-1}{s-1}.$
		Solving for $\deg_R(x)$ and recalling the choice of constant yields the desired bound.
	\end{proofclaim}
	Since $r$ is divisible by $s$, the claim and \cref{lem:matching} imply that there is a perfect matching $\cM$ in~$R$.
		{We will construct a $\cQ_2$-tiling by finding tuples inside each edge $X \in \mathcal{M}$, as follows.
			Each edge $X$ of $\cM$ corresponds to an $s$-partite family $\{U_1, \dotsc, U_s\}$.
			We construct a $\cQ_2$-tiling in $X$ greedily, by finding one tuple after another.
			Note that as long as the number of used vertices in each part $U_i$ is at most $3\nu m_2$, there remain at least $\nu m_2^s \geq d m_2^s$ edges among the leftover vertices.
			(Here we used that $d \leq \mu/16 = \nu$.)
			Within the unused vertices in $X$, select subsets $U'_i \subseteq U_i$, for all $1 \leq i \leq s$, of size $\beta m_2$ each such that $d_P(U'_1, \dotsc, U'_s) \geq d$.
			(This can be done using an averaging argument.)
			We apply \cref{lem:density-regular-tuple} with $\beta m_2$ playing the rôle of $m$, to obtain an $(\eps, d)$-lower-regular $\tilde m$-balanced $s$-tuple, where $m_3 =  2 \gamma \beta  m_2 \leq \tilde m \leq \beta m_2$.
			Note that this lower-regular $s$-tuple belongs to $\cQ_2$.
			We iterate this, to greedily find a $\cQ_2$-tiling on at least $3\nu s m_2$ vertices, and at most $3\nu s m_2 + \beta m_2$ vertices, in the $X$-partite subgraph of $P$ hosted by~$\bigcup X$.}

	Denote the union of these `fresh' $\cQ_2$-tilings, over all $X \in \mathcal{M}$, by $\cQ_{\text{fresh}}$.
	Note that $\cQ_{\text{fresh}}$ covers at least $3\nu m_2$ and at most $(3\nu + \beta)m_2$ vertices in each part of $\cU$.
	Since there are at least $(1 - \lambda){(n-sm_2)}/m_2$ parts in $\mathcal{U}$ outside $V(Q_1)$, it follows that $\cQ_{\text{fresh}}$ covers at least ${3\nu (1 - \lambda)n - s} \geq 2\nu \eta n = 4\beta n$ vertices outside $V(Q_1)$.

	Next, we define a $\cQ_2$-tiling $\cQ_{\text{rec}} \subset P$ in $V(Q_1) \sm V(\cQ_{\text{fresh}})$ to `recycle' what is left of~$Q_1$.
	Let $\{V_1, \dotsc, V_s\}$ be the parts of an ${\tilde m}$-balanced $(\eps,d)$-lower-regular $s$-tuple in $Q_1$.
	Recall that, $\cQ_{\text{fresh}}$ covers between $3\nu m_2$ and $(3\nu + \beta)m_2$ vertices in each part of $\cU$.
	This implies, in particular, that the difference of leftover vertices between any two clusters of $V_1,\dots,V_s$ is at most $\beta {\tilde m}$.
	We may therefore pick subsets $U_1 \subset V_1, \dotsc, U_s \subset V_s$ of size $m^\ast = (1 - 3\nu - \beta){\tilde m}$ among the uncovered vertices.
	Note that $(U_1,\dots,U_s)$ is still $(2\eps,d')$-lower-regular for $d' \geq d-\eps \geq d/2$ and thus in $\cQ_2$.
	Indeed, for all choices of $X_1\subset U_1,$ $\dots,$ $X_s \subset U_s$ of size at least $2\eps m^\ast \geq \eps \tilde m$ each, we have
	$ d_P(X_1,\dots,X_s)  \geq d - \eps$ by $(\eps,d)$-regularity of $(V_1,\dots,V_s)$.
	Let $\cQ_{\text{rec}} \subset P$ be the $\cQ_2$-tiling obtained this way.
	It follows that $\cQ_{\text{rec}} \cup \cQ_{\text{fresh}}$ covers all but $\beta v(Q_1) = \beta \lambda n$ vertices of $V(Q_1)$.
	It follows that $\cQ_{\text{fresh}} \cup \cQ_{\text{rec}}$ covers at least $(1 - \beta)\lambda n + 4 \beta n \geq (\lambda + 4\beta  - 2\beta )n = (\lambda + \nu \eta)n$ vertices.
\end{proof}

\COMMENT{
\subsection{Regularity}
An alternative proof of \cref{lem:almost-perfect-regular-tuple-tiling} can be derived from the (Weak Hypergraph) Regularity Lemma.
In the following, we present the details of the argument.
\begin{theorem}[Regularity Lemma]\label{thm:regularity-lemma}
	For every $s,r_0\geq 1$ and $\eps > 0$, there are $r_1 \geq r_0$ and $n_0$ with the following property.
	Let $P$ be a graph on $n \geq n_0$ vertices.
	Then there is a partition $\{V_0,V_1,\dots,V_r\}$ of $V(P)$ with $r_0 \leq r \leq r_1$ such that $|V_0| \leq \eps n$ and $|V_1| = \dots = |V_r|$.
	Moreover, all but at most $\eps \binom{r}{s}$ of the $s$-tuples $(V_{i_1},\dots,V_{i_s})$ are $(\eps,d)$-regular for some $d\geq 0$.
\end{theorem}
\begin{proof}[Proof of \cref{lem:almost-perfect-regular-tuple-tiling}]
	Let $\eps \leq \rho, \eta^2/8$, $d=2\eta$ and choose $r_0$ large enough with respect to $s$.
	Let $r_1,n_0$ be as in \cref{thm:regularity-lemma}, and set $\alpha = 1/t_1$.
	Let $P$ be an $s$-graph on $n\geq n_0$ vertices with $\delta_1(P) \geq \left(1-1/s^2 \right) \binom{n-1}{s-1}$.
	Let $\{V_0,V_1,\dots,V_r\}$ be obtained from \cref{thm:regularity-lemma} applied to $P$, and write $m = |V_1|$.
	Denote by $R$ the $s$-graph with vertex set $\{V_1,\dots,V_r\}$ and an $s$-edge whenever the corresponding $s$-tuple has density at least $d$ in $P$.
	We have $\delta_1(R) \geq \left( 1 - 1/s + \mu/2 \right) \binom{r-1}{s-1}$, which is proved in the same way as \cref{cla:reduced-degree}.
	(The only difference is that we have to discount edges that touch $V_0$.)
	Let $X \subset V(R)$ be the set of clusters that are in more than $\sqrt{\eps}  \binom{r-1}{s-1}$ tuples that are not $(\eps,d')$-regular for some $d'\geq 0$.
	So $|X| \binom{r-1}{s-1}  \leq \eps \binom{r+1}{s}$ by choice of $\{V_0,V_1,\dots,V_r\}$, which gives $|X| \leq 2\sqrt{\eps}$.
	Let $R' \subset R-X$ be the subgraph whose tuples are $(\eps,d')$-regular for some $d' \geq d$.
	It follows that $\delta_1(R') \geq \left( 1 - 1/s + \mu/4 \right) \binom{r'-1}{s-1}$ for $r'=r-|X|$.
	By deleting up to $s-1$ vertices from $R'$, we can assume that $r'$ is divisible by $s$.
	By \cref{lem:matching}, $R'$ has a perfect matching.
	This gives the desired tiling into $(\rho,d)$-regular tuples as the number of leftover vertices is at most $2\sqrt{\eps}t (n-|V_0|)/t + |V_0| \leq \eta n$, where we used that $|V_0|\leq \eps n$.
\end{proof}
}

\end{document}